\newtheorem{thm}{Theorem}[section]
\newtheorem{lem}[thm]{Lemma}
\newtheorem{rque}{Remark} [section]
\newtheorem{cor}[thm]{Corollary}
\DeclareMathOperator{\Hess}{Hess}
\DeclareMathOperator{\Var}{Var}
\DeclareMathOperator{\Ent}{Ent}
\newcommand{\BE}{Bakry-\'Emery }
\newcommand{\R}{\mathbb{R}}
\begin{document}

\title{Stability of the \BE theorem on $\R^n$}
\date{\today}

\author{Thomas~A.~Courtade$^*$ and Max Fathi$^{\dagger}$\\
~\\
$^{*}$UC Berkeley, Department of Electrical Engineering and Computer Sciences\\
$^{\dagger}$CNRS \& Universit\'e Paul Sabatier, Institut de Math\'ematiques de Toulouse\\
}

\maketitle

\begin{abstract}
We prove stability estimates for the \BE bound on Poincar\'e and logarithmic Sobolev constants of uniformly log-concave measures. In particular, we improve the quantitative bound in a result of De Philippis and Figalli asserting that if a $1$-uniformly log-concave measure has almost the same Poincar\'e constant as the standard Gaussian measure, then it almost splits off a Gaussian factor, and establish similar new results for logarithmic Sobolev inequalities.    As a consequence, we obtain dimension-free stability estimates for Gaussian concentration of Lipschitz functions.   The proofs are based on Stein's method,   optimal transport, and an approximate integration by parts identity relating measures and approximate optimizers in the associated functional inequality. 
\end{abstract}

\section{Introduction and main results}

The purpose of this work is to establish stability estimates for the \BE theorem, which states that the sharp constant for various functional inequalities for uniformly log-concave measures must be better than the sharp constant for the standard Gaussian measure. We shall focus on two main inequalities: the Poincar\'e inequality and the logarithmic Sobolev inequality. 

\subsection{Poincar\'e inequality}

A probability measure on $\R^n$ is said to satisfy a Poincar\'e inequality with constant $C$ if for any smooth test function $f$, its variance satisfies the bound
$$\Var_{\mu}(f) \leq C\int{|\nabla f|^2d\mu}.$$
The smallest possible constant in this inequality is called the Poincar\'e constant of $\mu$, which we shall denote by $C_P(\mu)$. Such inequalities play an important role in several areas of analysis, probability and statistics, such as concentration of measure, rates of convergence for stochastic dynamics and analysis of PDEs. This constant is also the inverse of the spectral gap of the Fokker-Planck (or overdamped Langevin) dynamic associated with $\mu$. A large class of probability measures satisfy such an inequality. In particular, if a probability measure is more log-concave than the standard Gaussian measure (that is, $\mu = e^{-V}dx$ with $\Hess V \geq \operatorname{I}_n$), then $C_P(\mu) \leq 1$. This result can be viewed as a consequence of the Brascamp-Lieb inequality \cite{BL76} or the \BE theory \cite{BE85}. 

More recently, Cheng and Zhou \cite{CZ} proved a rigidity property for the \BE theorem: if such a probability measure has its Poincar\'e constant equal to one, then it must be a product measure, with one of the factors being a Gaussian measure of unit variance. They also obtain a rigidity result in the more general setting of complete metric-measure spaces with positive Ricci curvature. See also \cite{HJS} for a weaker form of this rigidity in $\R^n$, and \cite{CL87} for rigidity in a different class of measures (and \cite{CFP18} for a corresponding stability estimate). 

The convexity condition we shall assume here is a particular case of the \BE curvature-dimension condition, itself a generalization of Ricci curvature lower bounds. Splitting theorems for manifolds satisfying a curvature bound and a geometric condition have been the topic of some interest, going back to work of Cheeger and Gromoll \cite{CG71, CC96}. More recently, rigidity and stability for a related (and stronger) isoperimetric inequality has been established \cite{CMM17} under the stronger curvature-dimension condition with finite dimension, using completely different techniques. 

Poincar\'e inequalities can be viewed as estimates on the smallest eigenvalue of the diffusion operator $-\Delta + \nabla V \cdot \nabla$. Stability for other spectral problems have been considered, such as Poincar\'e inequalities on bounded domains \cite{BDPV1, BDP} and a lower bound on the spectrum of Schr\"odinger operators \cite{CFL}, respectively with applications in shape optimization and quantum mechanics. 

The first main result of the present work is to improve the quantitative bounds in the following result of De Philippis and Figalli \cite{DPF}, which establishes a strong form of quantitative stability for the Poincar\'e constant for uniformly log-concave measures. 

\begin{thm} \label{thm_dpf}
Let $\mu = e^{-V}dx$ be a probability measure with $\Hess V \geq \operatorname{I}_n$, and assume that there exists $k$ functions $u_i \in H^1(\mu)$, $k \leq n$, such that for any $i \in \{1,..,k\}$ we have
$$\int{u_id\mu} = 0; \hspace{5mm} \int{u_i^2d\mu} = 1; \hspace{5mm} \int{\nabla u_i \cdot \nabla u_j d\mu} = 0, ~~~ \forall j\neq i$$
and 
$$\int{|\nabla u_i|^2d\mu} \leq 1 + \epsilon$$
for some $\epsilon \geq 0$. Then for any $\theta > 0$ there exists $C(n, \theta)$, a subspace $\mathcal{V}\subset \R^n$ with $\dim(\mathcal{V})=k$, and a vector  $p \in \mathcal{V}$  such that
$$W_1(\mu, \gamma_{p,\mathcal{V}} \otimes \bar{\mu}) \leq C(n,\theta)|\log \epsilon|^{-1/4 + \theta},$$
where $\gamma_{p,\mathcal{V}}$ is the standard Gaussian measure on $\mathcal{V}$ with barycenter $p$, and $\bar{\mu}$ is the marginal distribution of $\mu$ on $\mathcal{V}^{\bot}$, which enjoys the same convexity property as $\mu$.
\end{thm}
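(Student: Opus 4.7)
The plan is to combine an integrated Bochner identity with a stability argument for optimal transport in order to isolate the Gaussian factor. The guiding intuition is that if $u$ nearly saturates the Poincar\'e inequality under the \BE condition $\Hess V \geq \operatorname{I}_n$, then $u$ must be morally affine, and the directions these affine functions span must carry a quadratic piece of $V$.

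\textbf{Step 1 (Bochner).} Let $L = \Delta - \nabla V \cdot \nabla$ be the generator of the diffusion associated to $\mu$. The integrated Bochner identity
\begin{equation*}
\int (Lu)^2 d\mu = \int |\Hess u|^2 d\mu + \int \Hess V(\nabla u, \nabla u)\, d\mu,
\end{equation*}
combined with $\int u_i^2 d\mu = 1$, $\int |\nabla u_i|^2 d\mu \leq 1+\epsilon$, and the spectral bound $C_P(\mu) \leq 1$ (which places spectral mass on eigenvalues $\geq 1$), forces
\begin{equation*}
\int |\Hess u_i|^2 d\mu \lesssim \epsilon \qquad\text{and}\qquad \int (\Hess V - \operatorname{I}_n)(\nabla u_i, \nabla u_i)\, d\mu \lesssim \epsilon.
\end{equation*}
Applying the Poincar\'e inequality componentwise to $\nabla u_i$, the first bound gives $\|\nabla u_i - a_i\|_{L^2(\mu)}^2 \lesssim \epsilon$ for some constant vector $a_i \in \R^n$, so that $u_i$ is $L^2(\mu)$-close to an affine function $\ell_i(x) = a_i\cdot x + c_i$.

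\textbf{Step 2 (Identify $\mathcal{V}$ and $p$).} The centering $\int u_i d\mu = 0$ forces $c_i = -a_i\cdot m$ where $m$ is the barycenter of $\mu$, and the orthonormality relations $\int u_i u_j d\mu = \delta_{ij}$ then translate into $a_i^\top \Sigma a_j \approx \delta_{ij}$, where $\Sigma$ is the covariance of $\mu$. Taking $i=j$ and comparing with $\int|\nabla u_i|^2 d\mu \approx |a_i|^2 \leq 1+\epsilon$ shows that $\Sigma$ is approximately the identity along each $a_i$; hence the $a_i$ are approximately Euclidean-orthonormal and $\mathcal{V} := \operatorname{span}\{a_1,\ldots,a_k\}$ is the desired $k$-dimensional subspace, with $p$ the orthogonal projection of $m$ onto $\mathcal{V}$. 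Averaging the convexity-excess bound over an orthonormal basis of $\mathcal{V}$ gives $\int \|(\Hess V - \operatorname{I})|_\mathcal{V}\|_{\mathrm{HS}}^2\, d\mu \lesssim \epsilon$, and testing Bochner against products $u_i \otimes \phi$ with $\phi$ depending only on $x_{\mathcal{V}^\perp}$ controls the cross-Hessian $\Hess V|_{\mathcal{V}\times \mathcal{V}^\perp}$. Together these say $V$ approximately splits: $V(x) \approx \tfrac12 |x_\mathcal{V} - p|^2 + \bar V(x_{\mathcal{V}^\perp})$ in an $L^2(\mu)$ sense on the Hessian.

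\textbf{Step 3 ($W_1$ bound, and main obstacle).} To close, I would use Caffarelli's contraction theorem (valid under $\Hess V \geq \operatorname{I}_n$) to control the Brenier map $T$ from $\gamma_{p,\mathcal{V}}\otimes \bar\mu$ to $\mu$, and convert the Hessian estimates into a bound on $W_1(\mu, \gamma_{p,\mathcal{V}}\otimes\bar\mu) \leq \|T-\operatorname{id}\|_{L^1(\mu)}$ via the Monge--Amp\`ere equation. The principal obstacle is that integrated Hessian control does not transfer to uniform control of $T$: one must truncate to a ball of radius $R$, estimate the tail contribution using the sub-Gaussian concentration $\mu(|x|>R) \lesssim e^{-cR^2}$ implied by $\Hess V \geq \operatorname{I}_n$, and run a Caccioppoli/Moser-type iteration on the interior to upgrade $L^2$ Hessian information into a pointwise estimate on $T$ on the truncated domain. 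The interior estimate is polynomial in $\epsilon$, but the number of iterations required to beat the Sobolev exponent gap grows with $R$, and optimizing $R \sim \sqrt{|\log \epsilon|}$ against the tail yields a rate at best logarithmic in $\epsilon$. The exponent $-1/4+\theta$ precisely reflects the interpolation loss between $L^2$ and $L^1$ control of $T-\operatorname{id}$ together with the truncation, and improving it is exactly the point at which the present paper departs from the De Philippis--Figalli route in favor of Stein's method.
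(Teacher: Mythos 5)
The paper does not prove Theorem~\ref{thm_dpf} itself --- it is quoted from \cite{DPF} --- but the ``approximately affine'' step you attempt in Step~1 is the content of Lemma~\ref{lem_dpf} (proved there following \cite{DPF}), and the route is transport-based, not Bochner. Your Step~1 has a genuine gap: the integrated Bochner identity gives
\begin{equation*}
\int |\Hess u_i|^2\,d\mu \;\leq\; \int (Lu_i)^2\,d\mu \;-\; \int |\nabla u_i|^2\,d\mu,
\end{equation*}
so to extract $\int|\Hess u_i|^2\,d\mu \lesssim \epsilon$ you must bound $\int(Lu_i)^2\,d\mu$ from above by $1+O(\epsilon)$, and nothing in the hypotheses supplies this. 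In a spectral decomposition $u_i = \sum_j c_j\phi_j$ with $-L\phi_j = \lambda_j\phi_j$, you only know $\sum c_j^2 = 1$, $\sum\lambda_j c_j^2 \leq 1+\epsilon$ and $\lambda_j \geq 1$; a tiny high-frequency piece $c_N^2 \sim \epsilon/\lambda_N$ is compatible with these but makes $\int(Lu_i)^2 = \sum\lambda_j^2 c_j^2$ arbitrarily large. Indeed $u_i$ is assumed only to lie in $H^1(\mu)$, so $Lu_i$ need not be in $L^2(\mu)$ at all, and the subsequent ``componentwise Poincar\'e'' applied to $\nabla u_i$ implicitly requires $u_i\in H^2(\mu)$. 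The closeness of $u_i$ to an affine function therefore cannot be obtained this way without extra regularity.

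The argument that works, and is what Lemma~\ref{lem_dpf} actually does, stays entirely at first order: take the Brenier map $T$ pushing $\gamma$ to $\mu$, use Caffarelli's contraction theorem so that $\nabla T$ is symmetric with $\|\nabla T\|_{\mathrm{op}}\le 1$, set $v_i = u_i\circ T$, and exploit $(\operatorname{I}-\nabla T)^2 \le \operatorname{I}-(\nabla T)^2$ to obtain $\int|\nabla u_i\circ T - \nabla v_i|^2\,d\gamma \le \epsilon(1+\epsilon)$. Then decompose $v_i$ in the Hermite basis of $L^2(\gamma)$ and use the refined Gaussian Poincar\'e inequality (constant $1/2$ on the orthocomplement of affine functions) to conclude $v_i$ is $O(\sqrt\epsilon)$-close in $H^1(\gamma)$ to a linear function, and hence $\nabla u_i$ is $O(\sqrt\epsilon)$-close in $L^2(\mu)$ to a constant unit vector. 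No second-derivative control of $u_i$ enters. Your Step~3 correctly identifies the source of the logarithmic rate in the De Philippis--Figalli endgame (quantitative Monge--Amp\`ere/Caffarelli regularity, truncated at scale $R\sim\sqrt{|\log\epsilon|}$), but this is precisely what the present paper bypasses for its Theorem~\ref{thm_improved_dpf}: instead of controlling the transport map, it recasts the hypotheses as an approximate integration-by-parts identity (Lemma~\ref{lem_approx_euler}) and extracts the Gaussian factor directly by Stein's method, avoiding the nonlinear regularity theory and the attendant logarithmic loss altogether.
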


In this statement, $W_1$ stands for the classical $L^1$ Kantorovitch-Wasserstein distance \cite{Vill03}, and $H^1(\mu) := \{f; \int{(|f|^2 + |\nabla f|^2)d\mu} < \infty\}$ is a weighted Sobolev space with respect to $\mu$.  

We shall obtain the following improvement: 

\begin{thm} \label{thm_improved_dpf}
Under the same assumptions as in Theorem \ref{thm_dpf}, we have
$$W_1(\mu, \gamma_{p,\mathcal{V}}\otimes \bar{\mu}) \leq Ck^{3/2}\sqrt{\epsilon}.$$
In fact, we may take $C=18 \sqrt{2} < 26$.
\end{thm}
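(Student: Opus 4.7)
Following the Stein--plus--transport template announced in the abstract, I would organize the proof in four stages: (i) upgrade the variational hypothesis on $u_1,\ldots,u_k$ to a spectral one, namely that $-L:=-\Delta+\nabla V\cdot\nabla$ has its first $k$ nontrivial eigenvalues in $[1,1+\epsilon]$; (ii) use $\Gamma_2$-calculus to show the corresponding eigenfunctions are nearly affine, and extract from their gradients an orthonormal basis of $\mathcal V$; (iii) derive an approximate integration-by-parts identity asserting that $\Pi_{\mathcal V}(\nabla V-(x-p))$ is small in a weak sense; (iv) convert this Stein-type defect into the desired $W_1$ estimate via Stein's method on the product target $\gamma_{p,\mathcal V}\otimes\bar\mu$.

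\textbf{Linearization via $\Gamma_2$.} After invoking the min--max principle to replace the $u_i$ by $L^2(\mu)$-orthonormal eigenfunctions $\phi_1,\ldots,\phi_k$ (with eigenvalues $\lambda_i\in[1,1+\epsilon]$), the Bochner identity
\[
\int (L\phi_i)^2\,d\mu=\int\|\Hess\phi_i\|_{HS}^2\,d\mu+\int\langle \Hess V\,\nabla\phi_i,\nabla\phi_i\rangle\,d\mu,
\]
the assumption $\Hess V\geq\operatorname{I}_n$, and $\int(L\phi_i)^2\,d\mu=\lambda_i^2$ together force
\[
\int\|\Hess\phi_i\|_{HS}^2\,d\mu\leq\lambda_i^2-\lambda_i\leq(1+\epsilon)\epsilon.
\]
Applying Poincar\'e componentwise to $\nabla\phi_i$ then shows $\nabla\phi_i$ is within $L^2(\mu)$-distance $O(\sqrt\epsilon)$ of the constant vector $v_i:=\int\nabla\phi_i\,d\mu$, and a second Poincar\'e application yields $\phi_i\approx\ell_i$ in $L^2(\mu)$ for the affine function $\ell_i(x):=\langle v_i,x-m\rangle$ with $m:=\int x\,d\mu$. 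The Dirichlet-orthogonality $\int\nabla\phi_i\cdot\nabla\phi_j\,d\mu=\lambda_i\delta_{ij}$ then forces the Gram matrix of the $v_i$ to be $\operatorname{I}_k+O(\epsilon)$, so a Gram--Schmidt step produces an orthonormal basis $e_1,\ldots,e_k$ of the subspace $\mathcal V$; set $p:=\Pi_{\mathcal V} m$.

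\textbf{Approximate Stein identity and Stein's method.} For any smooth $g$, the integration by parts $\int\partial_{e_j}g\,d\mu=\int g\,\partial_{e_j}V\,d\mu$ together with the eigenrelation $\int\nabla g\cdot\nabla\phi_j\,d\mu=\lambda_j\int g\phi_j\,d\mu$, combined with $\nabla\phi_j\approx e_j$ and $\phi_j\approx\langle e_j,x-p\rangle$, give the approximate Stein identity
\[
\Bigl|\int g\bigl(\partial_{e_j}V-\langle e_j,x-p\rangle\bigr)\,d\mu\Bigr|\leq C\sqrt\epsilon\,\|g\|_{H^1(\mu)}\qquad(j=1,\ldots,k),
\]
i.e.\ $\mu$ satisfies the Gaussian Stein identity along $\mathcal V$ up to an $H^{-1}(\mu)$-error of order $\sqrt\epsilon$. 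For each $1$-Lipschitz $f$, I would then solve the Poisson equation $L^*\psi=f-\int f\,d(\gamma_{p,\mathcal V}\otimes\bar\mu)$ for the generator $L^*$ of the target measure. \BE bounds, valid because the target inherits $1$-log-concavity, provide $\|\nabla\psi\|_\infty\leq 1$ together with $L^2(\mu)$-control on $\Hess\psi$, and integration by parts against $\mu$ leaves
\[
\int f\,d\mu-\int f\,d(\gamma_{p,\mathcal V}\otimes\bar\mu)=\int\nabla\psi\cdot(\nabla V-\nabla W)\,d\mu,
\]
where $W(x)=\tfrac12|\Pi_{\mathcal V}(x-p)|^2+\bar V(\Pi_{\mathcal V^{\bot}}x)$. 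The $\mathcal V$-component of $\nabla V-\nabla W$ is the Stein defect already controlled; the $\mathcal V^{\bot}$-component has zero conditional expectation given $\Pi_{\mathcal V^{\bot}}x$ (because $\bar\mu$ is the actual marginal), and is absorbed using the conditional Poincar\'e inequality for $\mu$ on $\mathcal V$-slices.

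\textbf{Main obstacle.} The delicate point is step (iv): pairing a Stein defect that is naturally controlled only in $H^{-1}(\mu)$ against $\nabla\psi$ requires upgrading the latter from the $L^2$-type bounds one gets for free to a genuinely uniform Lipschitz bound, which means carefully exploiting the $1$-log-concavity of the mixed target $\gamma_{p,\mathcal V}\otimes\bar\mu$. In parallel, one must track how Gram--Schmidt and the summation over the $k$ coordinate directions each contribute polynomial-in-$k$ losses; balancing these carefully is what produces the $k^{3/2}\sqrt\epsilon$ rate with an explicit constant.
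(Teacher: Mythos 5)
Your overall template (approximate Euler--Lagrange identity, linearization of the near-optimizers, Stein's method with a Gaussian Poisson equation) is indeed the same as the paper's, and your $\Gamma_2$-calculus linearization in step~(ii) is a genuinely different and attractive route: the paper instead pushes $u_i$ through the Brenier map to the Gaussian via Caffarelli's contraction theorem and then uses the Hermite decomposition of $u_i\circ T$ (Lemma~\ref{lem_dpf}), whereas you would use the Bochner identity $\int (L\phi_i)^2 d\mu = \int\|\Hess\phi_i\|^2_{HS} d\mu + \int \langle\Hess V\,\nabla\phi_i,\nabla\phi_i\rangle d\mu$ together with $\Hess V\geq \operatorname{I}_n$ to get $\int\|\Hess\phi_i\|^2_{HS}d\mu\leq (1+\epsilon)\epsilon$, then two applications of Poincar\'e to show $\phi_i$ is nearly affine. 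This is more intrinsic, avoids optimal transport entirely, and even gives a sharper $O(\epsilon)$ control on the off-diagonal Gram entries of the $v_i$ (the paper only gets $O(\sqrt\epsilon)$ because transport-error cross terms enter). That said, two issues in your argument need attention.

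\textbf{The reduction to eigenfunctions is not free.} The hypotheses of Theorem~\ref{thm_dpf} give $u_i$ that are $L^2$-normalized, mean-zero and \emph{Dirichlet}-orthogonal, not $L^2$-orthogonal, so the min--max principle does not directly place the first $k$ nontrivial eigenvalues in $[1,1+\epsilon]$. One can show that the Gram matrix $A_{ij}=\int u_iu_j\,d\mu$ satisfies $A\leq D$ (by Poincar\'e), $A_{ii}=1$, $D_{ii}\leq 1+\epsilon$, hence $|A_{ij}|\leq\epsilon$ for $i\neq j$ and eigenvalues of $A$ lie in $[1-(k-1)\epsilon,1+(k-1)\epsilon]$; this only lands the first $k$ eigenvalues of $-L$ in $[1,1+O(k\epsilon)]$, which degrades your final rate by an extra $\sqrt{k}$ relative to $k^{3/2}\sqrt\epsilon$. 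The paper sidesteps this entirely by working directly with the $u_i$ via Lemma~\ref{lem_approx_euler}, never passing to actual eigenfunctions.

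\textbf{The cross term in step~(iv) is a genuine gap.} Solving $L^*\psi=f-\int f\,d\nu$ for the \emph{full} product generator $L^*$ of $\nu=\gamma_{p,\mathcal V}\otimes\bar\mu$ and integrating by parts leaves
\[
\int f\,d\mu - \int f\,d\nu = \int \nabla_y\psi\cdot\bigl(\Pi_{\mathcal V}\nabla V-(y-p)\bigr)d\mu + \int \nabla_z\psi\cdot\bigl(\nabla_z V-\nabla\bar V(z)\bigr)d\mu.
\]
The first term is the Stein defect you control. The second is the cross term, and your proposed treatment does not make it small. While $\nabla_z V-\nabla\bar V(z)$ has zero conditional mean given $z$, the factor $\nabla_z\psi$ depends on $y$ as well, so the term is a conditional covariance $\int \mathrm{Cov}_\mu(\nabla_z\psi,\nabla_z V\mid z)\,d\bar\mu$, and applying Cauchy--Schwarz plus conditional Poincar\'e on the $\mathcal V$-slices only produces $\bigl(\int\|\Hess_{yz}\psi\|^2\bigr)^{1/2}\bigl(\int\|\Hess_{yz}V\|^2\bigr)^{1/2}$-type bounds --- an $O(1)$ quantity, not $O(\sqrt\epsilon)$, since $\Hess V\geq\operatorname{I}_n$ does not control the off-diagonal block $\Hess_{yz}V$. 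The paper avoids this entirely by solving only a \emph{conditional} Gaussian Poisson equation: for each fixed $z$, Barbour's equation $f(y,z)-\int f(s,z)\,d\gamma_k(s)=y\cdot h(y,z)-\operatorname{Tr}(\nabla_y h)(y,z)$ involves only the standard Gaussian in the $\mathcal V$-variables, so $\int f\,d\mu-\int f\,d(\gamma_k\otimes\bar\mu)=\int\bigl(y\cdot h-\operatorname{Tr}\nabla_y h\bigr)d\mu$ exactly --- the $\bar\mu$ direction drops out because $\bar\mu$ is the exact marginal, and no cross term ever appears. This partial Poisson equation is the key structural device you are missing; with it, the only regularity needed is the pointwise bound $|\nabla h_i|\leq\sqrt\pi$ that the paper derives from the Ornstein--Uhlenbeck representation. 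Finally, note the paper handles $\epsilon\geq(18k)^{-2}$ by a trivial $W_2$ bound, which you would need as well to make the constant claim unconditional.
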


Beyond the improved dependence on $\epsilon$, the fact that our bound depends on $k$ and not on $n$ is useful for high-dimensional situations. 

The proof of \cite{DPF} is based on a stability version of Caffarelli's contraction theorem \cite{Caf00}, which is a regularity estimate on the nonlinear Monge-Amp\`ere PDE. To obtain the improved bound, we shall rely instead on Stein's method \cite{Ste72, Ste86}, which is a way of quantifying distances between probability measures using well-chosen integration by parts formulas. See \cite{Ros11} for an introduction to the topic. The main reason why this allows us to obtain better estimates is that this proof mostly remains at a linear level, instead of relying on nonlinear tools as in \cite{DPF}. The other main tool in the proof is that the test functions in the assumptions of the theorem can be viewed as approximate minimizers in a variational problem, which then give rise to an approximate Euler-Lagrange equation (up to reminder terms of order $\sqrt{\epsilon}$), which takes the form of an integration by parts formula. See Section \ref{strat_proof} for an overview of the strategy of proof. 
 
\begin{rque} When $k=n$, it is possible to improve the topology, and get an estimate of order $\sqrt{\epsilon}$ in the stronger $W_2$ distance, using results from \cite{LNP15, CFP18}. We do not know how to get a $W_2$ estimate when $k < n$, due to regularity issues for the Poisson equation we shall make use of in the proof. 
\end{rque}

We do not know if the bound is optimal. Testing on Gaussian measures with variance $1-\epsilon$ shows that the optimal rate cannot be better than $\epsilon$ (instead of $\sqrt{\epsilon}$). See also Remark 1.4 in \cite{DPF} for computations in dimension one in a related problem which suggest the sharp rate could be $\epsilon$.

Our main result has the following immediate corollary, which can be viewed as a dimension-free improvement of the \BE theorem.  

\begin{cor}\label{cor:1dimBE}
Let $\mu = e^{-V}dx$ be a probability measure with $\Hess V \geq \operatorname{I}_n$. There is a direction $\sigma \in \mathbb{S}^{n-1}$ and a vector $p \in \operatorname{Span}(\sigma)$ such that 
$$W_1(\mu, \gamma_{p,\sigma} \otimes \bar{\mu}) \leq C\sqrt{C_P(\mu)^{-1}-1},$$
where $\gamma_{p,\sigma}$ is the standard Gaussian measure on $\operatorname{Span}(\sigma)$ with barycenter $p$, and $\bar{\mu}$ is the marginal distribution of $\mu$ on $\operatorname{Span}(\sigma)^{\bot}$. 
\end{cor}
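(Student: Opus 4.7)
The plan is a one-step reduction to Theorem~\ref{thm_improved_dpf} applied with $k=1$. Set $\epsilon := C_P(\mu)^{-1} - 1$, which is nonnegative by the \BE bound $C_P(\mu)\le 1$. The variational characterization of the Poincar\'e constant produces, for every $\delta > 0$, a function $u \in H^1(\mu)$ with
$$\int u\,d\mu = 0, \qquad \int u^2 d\mu = 1, \qquad \int |\nabla u|^2 d\mu \leq 1 + \epsilon + \delta;$$
in fact, because $\Hess V \geq \operatorname{I}_n$ ensures that the weighted Laplacian $-\Delta + \nabla V\cdot \nabla$ has compact resolvent, one may take $u$ to be the first nontrivial eigenfunction and work with $\delta = 0$.

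Next I would feed $u_1 := u$ into Theorem~\ref{thm_improved_dpf} with $k = 1$. The orthogonality condition $\int \nabla u_i \cdot \nabla u_j\,d\mu = 0$ for $i \neq j$ is vacuous in this case, so the hypotheses are satisfied with parameter $\epsilon + \delta$, and the factor $k^{3/2}$ equals $1$. The theorem then delivers a one-dimensional subspace $\mathcal{V} = \operatorname{Span}(\sigma)$ with $\sigma \in \mathbb{S}^{n-1}$ and a vector $p \in \mathcal{V}$ such that
$$W_1(\mu,\, \gamma_{p,\sigma}\otimes \bar{\mu}) \;\leq\; C\sqrt{\epsilon+\delta} \;=\; C\sqrt{C_P(\mu)^{-1} - 1 + \delta},$$
with $C = 18\sqrt{2}$, where $\bar\mu$ is the marginal of $\mu$ on $\operatorname{Span}(\sigma)^\bot$. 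Taking $\delta = 0$ in the eigenfunction version yields the corollary.

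If one prefers to avoid invoking existence of the spectral minimizer, a short compactness argument closes out the proof: as $\delta \to 0$ along a sequence, the inequality above gives a uniform $W_1$ bound, and by compactness of $\mathbb{S}^{n-1}$ (together with tightness of the candidate $p_\delta$'s, which follows from the $W_1$ bound and the uniform log-concavity of $\mu$) one extracts a subsequence along which $(\sigma_\delta, p_\delta) \to (\sigma, p)$; the product measures $\gamma_{p_\delta, \sigma_\delta}\otimes \bar{\mu}_{\sigma_\delta}$ then converge weakly to $\gamma_{p, \sigma}\otimes \bar{\mu}_\sigma$, and lower semicontinuity of $W_1$ under weak convergence lets one pass to the limit. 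This limiting step is the only step with any content beyond quoting Theorem~\ref{thm_improved_dpf}, and it is not a genuine obstacle; the corollary is labelled ``immediate'' precisely because everything beyond the identification of $u$ as a first eigenfunction (or approximate one) is routine.
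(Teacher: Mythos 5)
Your proposal is correct and takes essentially the same approach as the paper: set up a function (or near-minimizing sequence) from the definition of $C_P(\mu)$ and feed it into Theorem~\ref{thm_improved_dpf} with $k=1$. The paper merely remarks that a suitable $u$ exists for any $\epsilon > C_P(\mu)^{-1}-1$ and leaves the passage to the limit implicit; you spell out that step via the first eigenfunction (or, if one prefers, the compactness/lower-semicontinuity argument), which is the same reasoning made explicit.
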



This corollary follows from Theorem \ref{thm_improved_dpf} since there must be a function $u$ satisfying the assumptions of that theorem for any $\epsilon > C_P(\mu)^{-1}-1$ , by definition of the Poincar\'e constant. We make use here of the fact that the bound in Theorem \ref{thm_improved_dpf} depends on $k$ and not $n$, unlike Theorem \ref{thm_dpf}, to get a dimension-free estimate.    A noteworthy consequence is the following  refinement of the classical dimension-free concentration bound  $\Var_{\mu}(F)\leq 1$ for 1-Lipschitz $F$.
\begin{cor} 
Let the notation of Corollary \ref{cor:1dimBE} prevail.  For any $1$-Lipschitz $F:\R^n \longrightarrow \R$, there exists 
a direction $ \sigma(F) \in \mathbb{S}^{n-1}$ and a vector $  p(F)\in \operatorname{Span}(\sigma)$ such that 
$$
W_1(\mu, \gamma_{p,\sigma} \otimes \bar{\mu}) \leq C \sqrt{  \Var_{\mu}(F)^{-1}-1}.
$$
 \end{cor}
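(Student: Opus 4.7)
The plan is to reduce this corollary to Theorem \ref{thm_improved_dpf} in the case $k=1$ by promoting $F$ to a normalized test function. If $\Var_\mu(F) = 0$ the right-hand side of the claim is $+\infty$ and there is nothing to prove; assuming $\Var_\mu(F) > 0$, I would set
$$u := \frac{F - \int F\,d\mu}{\sqrt{\Var_\mu(F)}},$$
so that $\int u\,d\mu = 0$ and $\int u^2\,d\mu = 1$ automatically, and since $k=1$ there is no orthogonality condition to verify.

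The remaining hypothesis is the near-sharp Dirichlet energy bound. Because $F$ is $1$-Lipschitz, Rademacher's theorem gives $|\nabla F|\leq 1$ almost everywhere, and $F \in L^2(\mu)$ since $\mu$ has Gaussian tails; hence $u \in H^1(\mu)$. Then
$$\int |\nabla u|^2\,d\mu \;=\; \frac{1}{\Var_\mu(F)}\int |\nabla F|^2\,d\mu \;\leq\; \frac{1}{\Var_\mu(F)} \;=\; 1 + \epsilon,$$
with $\epsilon := \Var_\mu(F)^{-1}-1 \geq 0$. The nonnegativity of $\epsilon$ is precisely the classical concentration bound $\Var_\mu(F)\leq 1$, which itself follows from $C_P(\mu)\leq 1$ applied to $F$.

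Feeding $u$ into Theorem \ref{thm_improved_dpf} with $k=1$ then produces $\sigma \in \mathbb{S}^{n-1}$ and $p \in \operatorname{Span}(\sigma)$ such that
$$W_1(\mu,\gamma_{p,\sigma}\otimes\bar\mu) \;\leq\; C \cdot 1^{3/2}\sqrt{\epsilon} \;=\; C\sqrt{\Var_\mu(F)^{-1}-1},$$
which is exactly the desired estimate. The only step requiring any care is the regularity of $F$: Theorem \ref{thm_improved_dpf} is phrased for $u \in H^1(\mu)$, which already covers the Lipschitz case, but should its proof quietly assume extra smoothness, I would mollify $F$ into a sequence of smooth $1$-Lipschitz approximations, apply Theorem \ref{thm_improved_dpf} to each, and pass to the limit using compactness of $\mathbb{S}^{n-1}$ together with lower semicontinuity of $W_1$ under weak convergence of probability measures. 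I expect this to be the only mild obstacle; the rest is a direct specialization of the main theorem.
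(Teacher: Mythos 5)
Your proof is correct. The one difference from what the paper implies is the route: the paper presents this corollary as a consequence of Corollary \ref{cor:1dimBE}, which amounts to noting that the Poincar\'e inequality applied to $F$ gives $\Var_\mu(F)\leq C_P(\mu)\int|\nabla F|^2\,d\mu\leq C_P(\mu)$, hence $\Var_\mu(F)^{-1}-1 \geq C_P(\mu)^{-1}-1$, and the $W_1$ bound then follows directly from Corollary \ref{cor:1dimBE} with a direction $\sigma$ and barycenter $p$ that do \emph{not} depend on $F$. You instead feed the normalized function $u=(F-\int F\,d\mu)/\sqrt{\Var_\mu(F)}$ straight into Theorem \ref{thm_improved_dpf} with $k=1$, yielding a potentially $F$-dependent $\sigma(F), p(F)$, which matches the notation in the statement. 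Both arguments are valid and give the same constant; yours is a bit more self-contained (it does not rely on first producing an approximate Poincar\'e optimizer to invoke Corollary \ref{cor:1dimBE}), while the paper's version has the small advantage of producing a single Gaussian factor that works simultaneously for all $1$-Lipschitz $F$. Your regularity worry is indeed moot: a $1$-Lipschitz $F$ has $|\nabla F|\leq 1$ a.e.\ by Rademacher and lies in $L^2(\mu)$ since $\mu$ is uniformly log-concave, so $u\in H^1(\mu)$ and no mollification is needed.
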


At this point, one might wonder if the convexity assumption is necessary. It cannot simply be dropped: if one looks at a general measure, its Poincar\'e constant may be equal to one, for example by rescaling an arbitrary (but nice) measure to enforce this, in which case there exists a function $u$ satisfying the assumptions, and in general there will not be a Gaussian factor. However, the convexity assumption will mainly be used to ensure the functions $u_i$ are close to coordinate functions, in a suitable basis of $\R^n$, and hence can be dropped if we assume extra knowledge on second moments. This leads to the following result, with improved dependence on $k$: 

\begin{thm} \label{thm_coord}
Assume  $C_P(\mu) \leq 1$, and that there exists an orthonormal family $e_1,..,e_k$ of $\R^n$ such that
$\Var_{\mu}(x \cdot e_i) \geq (1+\epsilon)^{-1}$, for  each $i\leq k$. Then there exists $p\in \mathcal{V} = \operatorname{Span}(e_1, \dots, e_k)$ such that 
$$W_1(\mu, \gamma_{p,\mathcal{V}} \otimes \bar{\mu}) \leq k  \sqrt{ \pi {\epsilon} },$$
where the measures  $\gamma_{p,\mathcal{V}}$ and   $\bar{\mu}$ are as defined in Theorem \ref{thm_dpf}. 
\end{thm}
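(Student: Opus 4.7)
The plan is to use Stein's method along the lines sketched for Theorem \ref{thm_improved_dpf}, taking advantage of the fact that here the approximate optimizers are the explicit linear coordinates $x \cdot e_i$. First I would translate coordinates so that $\int (x \cdot e_i)\, d\mu = 0$ for $i = 1,\dots,k$, which fixes the optimal $p = 0$. By Kantorovich-Rubinstein duality it then suffices to show $\bigl|\int \phi\, d\mu - \int \phi\, d(\gamma_{0,\mathcal{V}}\otimes\bar\mu)\bigr| \leq k\sqrt{\pi\epsilon}$ uniformly over $1$-Lipschitz $\phi\colon \R^n\to\R$.

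The key lemma is an approximate integration-by-parts (Stein) identity: for every $g \in H^1(\mu)$ and $i \leq k$,
$$\biggl|\int (x \cdot e_i)\, g\, d\mu - \int (e_i \cdot \nabla g)\, d\mu\biggr| \leq \sqrt{\epsilon}\, \|\nabla g\|_{L^2(\mu)}.$$
To prove it, apply the Poincar\'e inequality (which holds with constant $\leq 1$ by assumption) to the test function $h = (x \cdot e_i) + \lambda g$, expand the resulting inequality in $\lambda$, use $1 - \Var_\mu(x \cdot e_i) \leq \epsilon/(1+\epsilon) \leq \epsilon$, and optimize in $\lambda \in \R$.

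For the main estimate, fix a $1$-Lipschitz $\phi$ and solve the Stein PDE in the $\mathcal{V}$-variables via the Ornstein-Uhlenbeck representation
$$g(y,z) = -\int_0^\infty \bigl[P_t^y\phi(y,z) - \bar\phi(z)\bigr]\, dt, \qquad \bar\phi(z) = \int \phi(y,z)\,d\gamma_{0,\mathcal{V}}(y),$$
which yields $\Delta_y g - y \cdot \nabla_y g = \phi - \bar\phi$ for each fixed $z$. Integrating against $\mu$,
$$\int \phi\, d\mu - \int \phi\, d(\gamma_{0,\mathcal{V}}\otimes\bar\mu) = \sum_{i=1}^k \int \bigl(\partial_{y_i} F_i - y_i F_i\bigr)\, d\mu, \qquad F_i := \partial_{y_i} g,$$
and applying the approximate Stein identity above to each $F_i$ bounds the right-hand side by $\sqrt{\epsilon}\sum_{i=1}^k \|\nabla F_i\|_{L^2(\mu)}$.

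The remaining and hardest step is to bound $\sum_{i=1}^k \|\nabla F_i\|_{L^2(\mu)}$ by $k\sqrt{\pi}$ uniformly in $1$-Lipschitz $\phi$. Here I would use Mehler's formula together with Gaussian integration by parts to represent $\partial_\alpha \partial_{y_i} g$ as a weighted Gaussian average of first-order partials of $\phi$ (bounded by $1$ almost everywhere), then combine with Minkowski's integral inequality applied collectively over $\alpha \in \{1,\dots,n\}$ in order to avoid picking up a factor of $\sqrt{n-k}$ from the $z$-derivatives. The delicate point is that the full vector of mixed partials $\bigl(\partial_\alpha \partial_{y_i} g\bigr)_\alpha$ must be controlled directly by $|\nabla \phi|\leq 1$ rather than through a pointwise sum of coordinate-wise sup bounds; this is what produces the linear-in-$k$ dependence and the sharp constant $\sqrt{\pi}$.
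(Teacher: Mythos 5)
Your proposal is correct and follows essentially the same route as the paper: center the coordinates, prove an approximate Stein/integration-by-parts identity for the linear statistics $x\cdot e_i$, solve the Ornstein--Uhlenbeck Poisson equation in the $\mathcal V$-variables via the Mehler/Barbour representation, and close the argument with the pointwise bound $|\nabla h_i|\leq\sqrt\pi$ (which, as you correctly anticipate, must control the full $\R^n$-gradient uniformly by $|\nabla\phi|\leq 1$ to avoid an ambient-dimension factor). The only minor variation is your proof of the key approximate identity, obtained by applying the Poincar\'e inequality to $(x\cdot e_i)+\lambda g$ and optimizing in $\lambda$; the paper instead simply specializes its Lemma~\ref{lem_approx_euler} (proved via Lax--Milgram) to $u_i=(x\cdot e_i)/\sqrt{\Var_\mu(x\cdot e_i)}$, arriving at the same estimate.
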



\subsection{Logarithmic Sobolev inequality}

According to the \BE theorem \cite{BE85}, probability measures that are more log-concave than the standard Gaussian measure satisfy the logarithmic Sobolev inequality (LSI)
\begin{equation} \label{eq:LSImu}
\Ent_{\mu}(f^2) \leq 2C_{\mathrm{LSI}}(\mu)\int{|\nabla f|^2d\mu}; \hspace{5mm} C_{\mathrm{LSI}}(\mu) \leq 1
\end{equation}
where $\Ent_{\mu}(f^2) = \int{f^2 \log f^2 d\mu} - \left(\int{f^2d\mu}\right) \log \int{f^2d\mu}$, and $C_{\mathrm{LSI}}(\mu)$ stands for the sharpest possible constant for $\mu$ in this inequality. This functional inequality, originally introduced by Gross \cite{Gro75}, is strictly stronger than the Poincar\'e inequality, and the constant $1$ is sharp for the standard Gaussian measure. Moreover, Carlen  \cite{Car91} showed that for the Gaussian measure equality holds in the LSI if and only if the function $f$ is of the form $f(x) = Ce^{p \cdot x}$ for some vector $p \in \R^n$. The LSI is used to derive Gaussian concentration inequalities, as well as estimates on the rate of convergence to equilibrium for certain stochastic processes. We refer to \cite{BGL14} for background on this inequality and its applications. 

We study stability for the bound on the logarithmic Sobolev constant. Our second main result is the following estimate, showing that if $C_{\mathrm{LSI}}(\mu)$ is close to one, then  $\mu$ still approximately splits off a Gaussian factor, provided the approximate optimizer satisfies regularity assumptions.

\begin{thm}\label{thm:QstableLSI}
Consider a probability measure  $\mu= e^{-V}dx$ on $\R^n$   satisfying $\Hess V \geq \mathrm{I}_n$.  Let $u:\R^n \longrightarrow \R$ be a nonnegative function such that $\log u$ is $\lambda$-Lipschitz and  $\int{u^2d\mu} = 1$.  There exists a constant $C(\lambda)$, depending only on $\lambda$,  such that if 
\begin{align}
\Ent_{\mu}(u^2) \geq 2(1-\epsilon)\int{|\nabla u |^2d\mu}\label{eqn:fApproxExLSI}
\end{align}
for some $\epsilon \geq 0$, then there is a direction $\sigma \in \mathbb{S}^{n-1}$ for which 
\begin{align}
W_1(\mu, \gamma_{b,\sigma} \otimes \bar{\mu}) \leq C(\lambda)\left(\int{|\nabla u |^2d\mu}  \right)^{-1/2}\sqrt{\epsilon},\label{W1boundLSI}
\end{align}
where $\gamma_{b,\sigma}$ denotes the standard Gaussian  measure on $\operatorname{Span}(\sigma)$ with barycenter $b=\sigma \int x\cdot \sigma \, d\mu$, and $\bar{\mu}$ is the marginal distribution of $\mu$ on $\operatorname{Span}(\sigma)^{\bot}$, which enjoys the same convexity property as $\mu$.
\end{thm}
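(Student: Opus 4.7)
The plan is to adapt the Stein-method approach of Theorem~\ref{thm_improved_dpf} to the nonlinear LSI setting, combining it with an approximate Euler--Lagrange identity derived from~\eqref{eqn:fApproxExLSI}. As a first step I would reduce the $W_1$ estimate to controlling a Stein obstruction: given a candidate direction $\sigma\in\mathbb{S}^{n-1}$ and the corresponding barycenter $b=\int x\cdot\sigma\,d\mu$, solving the one-dimensional Gaussian Stein equation in the $\sigma$ direction (parametrized by the $\sigma^\perp$ coordinate) and integrating by parts against $\mu$ yields, for any $1$-Lipschitz $h:\R^n\to\R$, the identity
\[
\int h\,d\mu \;-\;\int h\,d(\gamma_{b,\sigma}\otimes\bar\mu) \;=\;\int \phi_\sigma\cdot\partial_\sigma G\,d\mu, \qquad \phi_\sigma(x) := \partial_\sigma V(x) - (x\cdot\sigma - b),
\]
where $G$ is the Stein solution, satisfying $\|\partial_\sigma G\|_\infty\leq C$ by standard one-dimensional Stein bounds. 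Thus the task reduces to producing $\sigma$ for which $\|\phi_\sigma\|_{L^1(\mu)}$ is suitably small.

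Next, I would derive and quantify an approximate Euler--Lagrange equation. Setting $v=\log u$, the exact LSI optimizers normalized by $\int u^2 d\mu=1$ satisfy $Lv + |\nabla v|^2 + v = 0$, where $L = \Delta - \nabla V\cdot\nabla$; integrating against $u^2 d\mu$ gives $\int(Lv+|\nabla v|^2+v)\,u^2 d\mu = -\delta/2$, with $\delta := 2\int|\nabla u|^2 d\mu - \Ent_\mu(u^2)\leq 2\epsilon\int|\nabla u|^2 d\mu$. The classical \BE proof of LSI, traced with care, yields the integral representation
\[
\delta \;=\; \int_0^\infty\!\!\int\bigl[|\Hess\log P_s u^2|^2 + \nabla\log P_s u^2\cdot(\Hess V - \mathrm{I}_n)\nabla\log P_s u^2\bigr] P_s u^2\,d\mu\,ds,
\]
in which both integrands are non-negative under $\Hess V\geq \mathrm{I}_n$. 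Combined with smoothing estimates afforded by the Lipschitz hypothesis $|\nabla v|\leq \lambda$, this identity should provide quantitative coercivity for the nonlinear residue $R := Lv+|\nabla v|^2+v$ in $L^2(u^2 d\mu)$, and in particular the linearization $\int|\nabla v - p|^2 u^2 d\mu\leq C(\lambda)\delta$ where $p := \int\nabla v\,u^2 d\mu$. This forces $v$ to be approximately affine in the direction $\sigma := p/|p|$ and yields the lower bound $|p|^2\geq \int|\nabla u|^2 d\mu - C(\lambda)\delta$.

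Substituting $\nabla v \approx |p|\sigma$ and $v\approx |p|(x\cdot\sigma) + c$ into the approximate Euler--Lagrange equation, after multiplication by appropriately chosen test functions, converts it into an approximate Stein-type identity in the direction $\sigma$ for the tilted measure $u^2 d\mu$. Transferring this back to $\mu$---using the Lipschitz bound on $v$ to control the change-of-measure factor---and combining with the first step yields
\[
\|\phi_\sigma\|_{L^1(\mu)}\;\leq\; C(\lambda)\frac{\sqrt{\delta}}{|p|} \;\leq\; C(\lambda)\Bigl(\int|\nabla u|^2 d\mu\Bigr)^{-1/2}\sqrt{\epsilon},
\]
which delivers the announced $W_1$ bound. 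The principal obstacle is the second step: extracting quantitative coercivity of the LSI deficit around its nonlinear critical manifold. In contrast to the Poincar\'e case, where the Euler--Lagrange equation is linear and spectral theory gives coercivity essentially for free, here the nonlinearity in $v$ forces an essential use of the Lipschitz hypothesis on $\log u$---this is both the source of the dependence of $C(\lambda)$ on $\lambda$ and the key mechanism for controlling the $u^{-2}$ factor that appears when passing from weighted to unweighted estimates.
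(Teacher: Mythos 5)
Your high-level architecture matches the paper's: derive an approximate Euler--Lagrange relation from $\epsilon$-optimality, show $\log u$ is close to an affine function, then close via Stein's method. But there is a concrete gap in the middle step, and a secondary issue in your Stein-step formulation.

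\textbf{The essential gap.} The quantitative heart of the proof is the estimate
\[
\int |\nabla \log u - p/2|^2\, u^2\,d\mu \;\leq\; C(\lambda)\,\epsilon \int |\nabla u|^2\,d\mu ,
\]
i.e.\ proximity of $\log u$ to an affine function in the $u^2\mu$-weighted $L^2$ norm (this is the paper's Lemma~\ref{lem_prox_f_linear}). You propose to obtain it from the Bakry--\'Emery integral representation of the LSI deficit, arguing that the nonnegative integrand $\int_0^\infty\!\int (|\Hess \log P_s u^2|^2 + \cdots)\,P_s u^2\,d\mu\,ds$ being $O(\epsilon)$ ``should provide quantitative coercivity.'' That inference is not supplied, and it is precisely the hard part: the integral gives smallness of Hessians of $\log P_s u^2$ averaged over $s\geq 0$, and extracting information at $s = 0$ requires a careful interplay with the semigroup that you have not carried out. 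The paper instead bypasses the BE semigroup formula entirely: it pushes forward by the Brenier map $T : \gamma \to \mu$ (which is $1$-Lipschitz by Caffarelli's theorem), defines $v(x) = u(T(x+p))e^{-p\cdot x/2 - |p|^2/4}$, shows that $v^2 d\gamma$ is a centered probability measure with LSI deficit $\leq 2\epsilon\int|\nabla u|^2 d\mu$, and then invokes the quantitative Gaussian LSI stability of Fathi--Indrei--Ledoux (via Lemma~\ref{lem:FIL}, made applicable by the Aida--Shigekawa perturbation theorem since $\log v$ is still Lipschitz). Pulling back along $T$ and using $(I-\nabla T)^2 \leq I - (\nabla T)^2$ gives the stated proximity. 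This transport step plus a known Gaussian stability result is a concrete replacement for the coercivity you only conjecture.

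\textbf{Secondary issues.} Your Stein reduction rewrites the one-dimensional Stein identity by integrating by parts against $e^{-V}$ to produce the obstruction $\phi_\sigma = \partial_\sigma V - (x\cdot\sigma - b)$ and aims to bound $\|\phi_\sigma\|_{L^1(\mu)}$. This is algebraically consistent, but it requires regularity and integrability of $\nabla V$ that the paper never assumes or uses; the paper's formulation keeps the error as a pairing $\int (g\,(x\cdot e) - e\cdot\nabla g)\,d\mu$ against a Lipschitz test function $g$ and applies the approximate integration-by-parts inequality directly, avoiding $\nabla V$ altogether. More importantly, converting your approximate Euler--Lagrange residue (which naturally lives against the weight $u^2 d\mu$) into a statement about $\phi_\sigma$ against $\mu$ requires the same $u^{-2}$ change-of-measure control (via Aida--Shigekawa and Herbst-type concentration under the $\lambda$-Lipschitz hypothesis) that you acknowledge but do not execute; this is where several $C(\lambda)$ factors must be tracked. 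Finally, the paper also dispatches the regime where $\epsilon$ is bounded away from zero by a trivial $W_1 \leq 2$ bound, a case your proposal should handle separately since your $|p|$ lower bound degenerates there.
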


The constant  $C(\lambda)$ can, in principle, be made explicit.  However, its expression would be quite  complicated and our arguments make no attempt to optimize it,  so we   do not attempt to do so. Note that we should expect the bound to get worse if $\int{|\nabla u|^2d\mu}$ is small, since if $u$ was constant it would be a trivial minimizer of the LSI, no matter what $\mu$ would be, so the bound must rule out that situation in some way. Up to the regularity assumption that $\log u$ is Lipschitz, existence of such a function is a weaker assumption than the assumption of Theorem \ref{thm_improved_dpf}, since $C_{\mathrm{LSI}}(\mu) \geq C_P(\mu)$.  

Like Theorem \ref{thm_improved_dpf}, it is possible to give a version of Theorem \ref{thm:QstableLSI} with $k$ orthogonal minimizers, in the sense that $\mu$ approximately splits off a $k$-dimensional factor, provided $\int{\nabla \log u_i \cdot \nabla  \log u_j d\mu} = 0$ for approximate minimizers $(u_i)_{i\leq k}$. The constant $C$ would depend on $k$, but not on the ambient dimension. 

\begin{rque}
The stipulation that $\int{u^2d\mu} = 1$ is for made convenience, and comes without loss of generality.  Indeed, the LSI is homogenous of degree 2, so rescaling $u \longrightarrow \alpha u$ for $\alpha \in \R$  affects neither $\epsilon$-optimality in the sense of \eqref{eqn:fApproxExLSI}, nor the $\lambda$-Lipschitz property assumed of $\log u$.  Further, the assumed nonnegativity of $u$ is also for convenience, and comes without loss of generality since the log-lipschitz assumption already enforces it to have constant sign. 
\end{rque}

\begin{rque}\label{Rmk_utdmu}
Theorem \ref{thm:QstableLSI} can be strengthened to say that, for any $t\in \R$, the probability measure proportional to $u^t \mu$ satisfies \eqref{W1boundLSI}.  The only changes are (i) the barycenter $b$ becomes $b=Z^{-1} \sigma \cdot \int x\cdot \sigma u^t d\mu,$ where  $Z:= \int u^t d\mu$ is a normalizing constant; and (ii) the constant $C$ will depend on both $\lambda$ and $t$.  See Remark \ref{rmk:Measures_ut} for details. 
\end{rque}

An important consequence of the LSI is the   classical concentration inequality for Lipschitz functions, established via Herbst's argument:   If $\mu$ satisfies \eqref{eq:LSImu} and  $F$ is $L$-Lipschitz, then  
\begin{align} 
\int{e^{F }d\mu} \leq \exp\left(     \int F d\mu + L^2/2  \right).\label{LipschitzConcentrationIneq}
\end{align}
Equality is attained if $\mu$ splits off a standard Gaussian factor in a direction $\sigma \in \mathbb{S}^{n-1}$, in which case $F(x) = L \sigma \cdot x $ achieves equality.  
The following provides a quantitative stability estimate for this result, provided $\mu$ is uniformly log-concave. 
\begin{thm}\label{thm:ThmExpConcentration}
Let  $\mu= e^{-V}dx$ be a probability measure on $\R^n$   satisfying $\Hess V  \geq \mathrm{I}_n$, and fix any  $L>0$.  There exists a constant $C(L)$ such that if  
$F: \R^n \longrightarrow \R$ satisfies $ \|F \|_{\mathrm{Lip}} \leq L$ and 
$$
\int{e^{F }d\mu} \geq \exp\left(    \int F d\mu +  \frac{L^2}{2} (1 - \epsilon/2  )  \right)
$$
for some $\epsilon \geq 0$, then there is a direction $\sigma \in \mathbb{S}^{n-1}$ for which 
\begin{align}
 W_1(\mu, \gamma_{b,\sigma} \otimes \bar{\mu})\leq C(L) \sqrt{\epsilon}, \label{W1boundLipschitz}
\end{align}
where $\gamma_{b,\sigma}$ and $\bar{\mu}$ are the same as in Theorem \ref{thm:QstableLSI}.
\end{thm}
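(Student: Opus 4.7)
The plan is to reduce Theorem \ref{thm:ThmExpConcentration} to Theorem \ref{thm:QstableLSI} via the classical Herbst argument. The idea is that near-saturation of the concentration bound \eqref{LipschitzConcentrationIneq} must come from near-saturation of the LSI for the tilted function $e^{t_* F/2}$ at some well-chosen $t_* \in (0,1]$, at which point the stability estimate of Theorem \ref{thm:QstableLSI} applies directly.

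To implement this, first subtract a constant so that $\int F d\mu = 0$; this affects neither the hypothesis nor the conclusion. Set $H(t) := \int e^{tF}d\mu$ and $K(t) := t^{-1}\log H(t)$, with $K(0^+) = 0$. The standard differentiation identity yields
$$K'(t) = \frac{\Ent_\mu(e^{tF})}{t^2 H(t)},$$
and applying \eqref{eq:LSImu} to $e^{tF/2}$, together with $C_{\mathrm{LSI}}(\mu) \leq 1$ and $|\nabla F| \leq L$, gives $K'(t) \leq L^2/2$. Integrating recovers the Herbst bound $K(1) \leq L^2/2$, and the hypothesis rewrites as
$$\int_0^1 \left(\frac{L^2}{2} - K'(t)\right)dt \;\leq\; \frac{L^2\epsilon}{4}.$$
A mean-value argument restricted to the subinterval $[1/2,1]$ yields some $t_* \in [1/2,1]$ at which $K'(t_*) \geq (L^2/2)(1-\epsilon)$.

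Now set $u := H(t_*)^{-1/2} e^{t_* F/2}$, so that $\int u^2 d\mu = 1$ and $\log u$ is $(L/2)$-Lipschitz. A direct computation gives
$$\Ent_\mu(u^2) = t_*^2 K'(t_*) \quad\text{and}\quad \int |\nabla u|^2 d\mu = \frac{t_*^2}{4}\int u^2 |\nabla F|^2 d\mu \leq \frac{t_*^2 L^2}{4},$$
whence $\Ent_\mu(u^2) \geq 2(1-\epsilon)\int|\nabla u|^2 d\mu$, matching the hypothesis \eqref{eqn:fApproxExLSI}. Moreover, the LSI itself forces $\int|\nabla u|^2 d\mu \geq \Ent_\mu(u^2)/2 \geq L^2(1-\epsilon)/16$, which is bounded below by a multiple of $L^2$ as soon as $\epsilon \leq 1/2$ (say); the complementary range $\epsilon > 1/2$ is handled trivially, since uniform log-concavity already forces $W_1(\mu, \gamma_{b,\sigma} \otimes \bar\mu)$ to be universally bounded (e.g.\ via the $W_1 \leq 2$ estimate obtained by coupling the $\sigma$-marginal of $\mu$ — which has variance at most $1$ by Brascamp-Lieb — with the standard Gaussian). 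Invoking Theorem \ref{thm:QstableLSI} with $\lambda = L/2$ then gives $W_1(\mu, \gamma_{b,\sigma} \otimes \bar{\mu}) \leq C(L)\sqrt{\epsilon}$.

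The main delicate point is ensuring that the chosen tilt $t_*$ stays bounded away from zero, so that $\int|\nabla u|^2 d\mu$ does not vanish and the inverse factor in \eqref{W1boundLSI} remains harmless; restricting the averaging to $[1/2,1]$ rather than $[0,1]$ is the natural workaround. Beyond this, the argument is routine bookkeeping, and the constant $C(L)$ inherits its $L$-dependence from the $\lambda = L/2$ factor in Theorem \ref{thm:QstableLSI} together with the $L^2$ lower bound on $\int|\nabla u|^2 d\mu$.
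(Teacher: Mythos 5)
Your proof is correct and follows essentially the same route as the paper's: translate near-extremality in the Herbst concentration bound into near-extremality in the LSI for a tilted function $e^{t_* F/2}$ with $t_* \in [1/2,1]$, observe that $\log u$ is then $(L/2)$-Lipschitz, invoke Theorem~\ref{thm:QstableLSI}, and lower-bound $\int |\nabla u|^2 d\mu$ to absorb the prefactor into $C(L)$. The only cosmetic difference is in selecting the good tilt: the paper applies a Markov-inequality measure bound to the deficit on $(0,1)$ and then asserts $\lambda_0 \in [1/2,1]$, whereas you restrict the averaging to $[1/2,1]$ from the outset, which makes the lower bound $t_* \geq 1/2$ (and hence the nondegeneracy of $\int|\nabla u|^2 d\mu$) slightly more transparent.
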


There have been some recent works on dimension-free stability for Gaussian concentration estimates \cite{BJ17, CaMa17}, which improve the bounds with reminder terms that compare the shape of level sets to hyperplane, and can be transferred to uniformly log-concave measures via the Caffarelli contraction theorem. It is unclear whether those results and ours can be compared. 

\subsection{Strategy of proof} \label{strat_proof}

The proofs of  Theorems \ref{thm_improved_dpf} and \ref{thm:QstableLSI} are based on the same broad strategy, with three main steps. To our knowledge, this way of implementing Stein's method to study stability in variational problems is new. 

The first step can be stated in a broad abstract framework. Consider a general minimization problem of the form
$$\mu \longrightarrow \underset{f}{\inf} \hspace{1mm} \int{H(f, \nabla f)d\mu}$$
and assume the infimum over a class of probability measures $\mathcal{P}$ is known, say equal to zero, and that we can describe the subset of measures $\mu_0$ and associated functions $f_0$ such that $\int{H(f_0, \nabla f_0)d\mu_0} = 0$. Beyond the questions considered in this work, many relevant inequalities from analysis, geometry and probability can be cast in this form, such as sharp constants for Sobolev inequalities, variational problems in statistical physics, eigenvalue problems, and so on.

The Euler-Lagrange equation for problems of this form is
$$\int{u\partial_1 H(f_0, \nabla f_0) + \nabla u \cdot \partial_2 H(f_0, \nabla f_0)d\mu} = 0 \hspace{2mm} \forall u.$$
So any minimization problem of this form gives rise to an integration by parts formula for measures that achieve the infimum. Now, if we consider a measure $\mu_1$ and a function $f_1$ such that $\int{H(f_1, \nabla f_1)d\mu_1} \leq \epsilon$, the problems we consider in this paper can be stated as trying to show that $\mu_1$ is close to the class of measures at  which the infimum is reached. At a heuristic level, and maybe under extra assumptions on $f_1$, we expect an approximate Euler-Lagrange equation of the form
$$\int{u\partial_1 H(f_1, \nabla f_1) + \nabla u \cdot \partial_2 H(f_1, \nabla f_1)d\mu_1} = o(1)\times F(||f||, ||u||')$$
to hold for some class of test functions, and norms $\|\cdot\|, \| \cdot\|'$ adapted to the problem. It is in this way that we obtain an approximate integration by parts identity, which is the basic setup required for Stein's method. 

The second step is to show that $f_1$ can be replaced up to small error by a function $f_0$ such that $\int{H(f_0, \nabla f_0)d\mu_0} = 0$ for some other probability measure. In the present paper,  this is done by considering a transport map $T$ sending $\mu_0$ onto $\mu_1$, and proving that $f_1 \circ T$ approximately reaches the infimum when integrating with respect to $\mu_0$. If the minimization problem with fixed reference measure $\mu_0$ satisfies some quantitative stability property, this would mean $f_1 \circ T$ is close to some function $f_0$ for which the infimum is reached. We then deduce that $f_0$ is close to $f_1$ using specific regularity properties of the transport map, using the convexity assumptions in our problem. This part of the proof seems to be of less general scope than the other two steps. {As a tool, we use stability estimates for the sharp functional inequality with fixed reference measure.} The conclusion is that $\mu_1$ satisfies an approximate integration by parts formula
$$\int{u\partial_1 H(f_0, \nabla f_0) + \nabla u \cdot \partial_2 H(f_0, \nabla f_0)d\mu_1} = o(1)\times F(||u||').$$

The third part of the proof is to compare $\mu_1$ to $\mu_0$ using Stein's method \cite{Ste72, Ste86} and the fact that they both satisfy the same integration by parts formula, up to small error. In our situation, $\mu_0$ is Gaussian in some direction and Stein's method for such measures has been well-explored.  We expect this type of argument to also apply to non-Gaussian situations, where Stein's method has found some successes for other types of problems \cite{Ros11}. 

\section{Stability of the Poincar\'e inequality}

\subsection{Proof of Theorem \ref{thm_improved_dpf}}

First, let us note that the assumptions constrain the value of the Poincar\'e constant

\begin{lem}
Under the assumptions of Theorem \ref{thm_dpf}, we have $1 - \epsilon \leq C_P(\mu) \leq 1$. 
\end{lem}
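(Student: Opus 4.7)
The statement has two parts: the upper bound $C_P(\mu)\leq 1$ and the lower bound $C_P(\mu)\geq 1-\epsilon$.

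For the upper bound, I would simply invoke the Bakry--\'Emery theorem \cite{BE85}: the assumption $\Hess V \geq \mathrm{I}_n$ immediately yields $C_P(\mu)\leq 1$. This is already noted earlier in the introduction, so it requires no further argument.

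For the lower bound, the plan is to use the $u_i$'s as test functions in the Poincar\'e inequality. By the definition of $C_P(\mu)$ as the sharpest constant, applying the inequality to $u_1$ (any single $u_i$ will do, so the orthogonality hypotheses and even $k\geq 2$ are not needed here) gives
$$\Var_{\mu}(u_1) \leq C_P(\mu)\int |\nabla u_1|^2\,d\mu.$$
The normalization conditions $\int u_1\,d\mu=0$ and $\int u_1^2\,d\mu=1$ imply $\Var_\mu(u_1)=1$, and the hypothesis gives $\int|\nabla u_1|^2\,d\mu \leq 1+\epsilon$. Therefore
$$1 \leq C_P(\mu)(1+\epsilon), \qquad \text{i.e.}\qquad C_P(\mu) \geq \frac{1}{1+\epsilon} \geq 1-\epsilon,$$
where the last inequality is the elementary bound $(1-\epsilon)(1+\epsilon)=1-\epsilon^2\leq 1$.

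There is no real obstacle: the lemma is essentially a restatement of the hypotheses in light of the variational definition of $C_P$, paired with the Bakry--\'Emery bound. The only thing worth remarking is that a single test function already forces the lower bound, so the full orthogonal system of $k$ functions is not used at this stage; it will matter later when identifying the $k$-dimensional Gaussian factor.
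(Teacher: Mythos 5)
Your proof is correct and follows essentially the same route as the paper: the upper bound is cited from Bakry--\'Emery, and the lower bound is obtained by plugging $u_1$ into the Poincar\'e inequality and using the normalization $\Var_\mu(u_1)=1$ together with $\int|\nabla u_1|^2\,d\mu\leq 1+\epsilon$, then $(1+\epsilon)^{-1}\geq 1-\epsilon$.
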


\begin{proof}
The bound $C_P(\mu) \leq 1$ is true under the uniform convexity assumption of the potential. This is a classical result on Poincar\'e inequalities, which can be obtained for example via the \BE theory \cite{BGL14}, or the Caffarelli contraction theorem \cite{Caf00}. 
The second bound comes from 
$$1 = \int{u_1^2d\mu} \leq C_P(\mu) \int{|\nabla u_1|^2d\mu} \leq C_P(\mu) (1+\epsilon)$$
so that $C_P(\mu) \geq (1+\epsilon)^{-1} \geq 1 - \epsilon$. 
\end{proof}

We have the following bounds on proximity between the $\nabla u_i$ and unit vectors, essentially proved in \cite{DPF}: 

\begin{lem} \label{lem_dpf}
Assume $\epsilon < (18 k)^{-2}$.
Under the assumptions of Theorem \ref{thm_dpf}, there exist unit vectors $\hat{w}_1,..,\hat{w}_k \subset \R^n$ such that
$$\int{|\nabla u_i - \hat{w}_i|^2d\mu} \leq 9 \epsilon; \hspace{1cm} |\hat{w}_i \cdot \hat{w}_j|  \leq 18 \sqrt{\epsilon}, ~~~i\neq j. $$
Moreover,  $\operatorname{dim}(\operatorname{Span}(\hat{w}_1,..,\hat{w}_k))=k$.
\end{lem}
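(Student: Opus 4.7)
Following \cite{DPF}, the natural candidate is $\bar w_i := \int \nabla u_i\, d\mu$ and $\hat w_i := \bar w_i/|\bar w_i|$ (once we know $\bar w_i \neq 0$). The heart of the matter is to show that $\nabla u_i$ is $L^2(\mu)$-close to the constant $\bar w_i$; since $u_i$ is an approximate first eigenfunction of $-L = -\Delta + \nabla V \cdot \nabla$ under the assumption $\int u_i^2 d\mu = 1$, $\int |\nabla u_i|^2 d\mu \leq 1+\epsilon$, we expect $u_i$ to be approximately linear, and hence its gradient approximately constant.

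\textit{Step 1 (gradient concentration).} Apply the Brascamp--Lieb inequality to each $\partial_j u_i$ (viewed as a function of mean $(\bar w_i)_j$), summed over $j$, to get
$$\int |\nabla u_i - \bar w_i|^2 d\mu \leq \int |\nabla^2 u_i|^2 d\mu.$$
The integrated Bochner identity under $\Hess V \geq I_n$ yields $\int |\nabla^2 u_i|^2 d\mu \leq \int (Lu_i)^2 d\mu - \int |\nabla u_i|^2 d\mu$. The remaining task, and the essence of the DPF argument, is to show this Hessian $L^2$-norm is $O(\epsilon)$. Since a direct spectral decomposition does not control $\int (Lu_i)^2 d\mu$ from above in terms of the data, one imports the Gaussian spectral gap: by Caffarelli's contraction theorem there is a $1$-Lipschitz $T : \gamma \to \mu$ so that $\tilde u_i := u_i \circ T$ satisfies the same Rayleigh quotient bound on $\gamma$; the explicit gap between the first ($=1$) and second ($=2$) Hermite eigenvalues forces $\tilde u_i$ to be $H^1(\gamma)$-close to a linear function, which can then be transferred back.

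\textit{Step 2 (unit normalization).} Combining $\int |\nabla u_i - \bar w_i|^2 d\mu = \int |\nabla u_i|^2 d\mu - |\bar w_i|^2$ with the Poincar\'e bound $\int |\nabla u_i|^2 d\mu \geq 1$ gives $|\bar w_i|^2 \geq 1 - O(\epsilon)$, so $|\bar w_i| \geq 1 - O(\epsilon)$. Then
$$\int |\nabla u_i - \hat w_i|^2 d\mu = \int |\nabla u_i|^2 d\mu + 1 - 2 |\bar w_i| \leq \epsilon + O(\epsilon),$$
which yields the $9\epsilon$ bound after tracking constants.

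\textit{Step 3 (near-orthogonality).} Expand the Dirichlet orthogonality
$$0 = \int \nabla u_i \cdot \nabla u_j\, d\mu = \hat w_i \cdot \hat w_j + \hat w_j \cdot \! \int (\nabla u_i - \hat w_i) d\mu + \hat w_i \cdot \! \int (\nabla u_j - \hat w_j) d\mu + \int (\nabla u_i - \hat w_i)(\nabla u_j - \hat w_j) d\mu,$$
and apply Cauchy--Schwarz termwise using Step~1 (together with $|\hat w_i| = |\hat w_j| = 1$). This gives $|\hat w_i \cdot \hat w_j| \leq 18\sqrt{\epsilon}$. For the final claim, the Gram matrix $G_{ij} = \hat w_i \cdot \hat w_j$ has unit diagonal and off-diagonal entries bounded by $18\sqrt{\epsilon}$; under the hypothesis $\epsilon < (18k)^{-2}$ the row $\ell^1$-norms of $G - I$ are strictly less than $1$, so $G$ is strictly diagonally dominant by the Levy--Desplanques criterion, hence invertible, giving $\dim \operatorname{Span}(\hat w_1, \dots, \hat w_k) = k$.

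\textit{Main obstacle.} The delicate piece is the final substep of Step~1, namely controlling $\int (Lu_i)^2 d\mu - \int |\nabla u_i|^2 d\mu$ by $O(\epsilon)$. A purely spectral analysis of $-L$ on $L^2_0(\mu)$ does not suffice, since $\int (Lu_i)^2 d\mu$ is not a priori controlled by the Rayleigh quotient alone; this is where \cite{DPF} invokes Caffarelli's contraction theorem to inherit the exact Hermite spectral gap of the Gaussian.
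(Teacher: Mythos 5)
Your plan correctly identifies the main tools (Caffarelli's contraction theorem together with the explicit Hermite spectral gap on $\gamma$), and Steps 2 and 3 are structured correctly, so the proposal is genuinely on the right track. However, there is a real gap exactly where you write ``which can then be transferred back.'' Setting $v_i := u_i \circ T$ and carrying out the Hermite decomposition gives an $L^2(\gamma)$ estimate on $|\nabla v_i - w_i|$, but that does \emph{not} automatically control $\int |\nabla u_i - \hat w_i|^2\, d\mu = \int |\nabla u_i \circ T - \hat w_i|^2\, d\gamma$, because one still needs to compare $\nabla u_i \circ T$ with $\nabla v_i = \nabla T\,(\nabla u_i \circ T)$, and $\|\nabla T\|_{\mathrm{op}}\leq 1$ alone does not give an $O(\epsilon)$ bound. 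The mechanism the paper uses (following DPF) is: (i) a sandwich between the Rayleigh quotients of $u_i$ under $\mu$ and $v_i$ under $\gamma$ gives $0 \leq \int (|\nabla u_i|^2 \circ T - |\nabla v_i|^2)\, d\gamma \leq \epsilon(1+\epsilon)$; and then (ii) the matrix inequality $(\mathrm{I}-\nabla T)^2 \leq \mathrm{I} - (\nabla T)^2$, valid because $\nabla T$ is symmetric with spectrum in $[0,1]$, yields
\begin{align*}
\int |\nabla u_i\circ T - \nabla v_i|^2\, d\gamma = \int |(\mathrm{I}-\nabla T)(\nabla u_i\circ T)|^2\, d\gamma \leq \int\bigl(|\nabla u_i|^2\circ T - |\nabla v_i|^2\bigr)\, d\gamma \leq \epsilon(1+\epsilon).
\end{align*}
Without (ii) there is no quantitative transfer back to $\mu$, and this is the step your outline omits. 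It is also what fixes the explicit constant $9$ via a three-term triangle inequality combining this bound, the bound $\int|\nabla v_i - w_i|^2\,d\gamma\leq 2\epsilon$ from the spectral gap, and $|w_i - \hat w_i|^2\leq \epsilon$. Related to this, the candidate vector in the paper is the linear Hermite coefficient $w_i = \int \xi\, v_i(\xi)\, d\gamma(\xi) = \int \nabla v_i\, d\gamma$ of $v_i$, not $\int \nabla u_i\, d\mu$; the constant-tracking relies on the former.

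Two smaller remarks. First, the Brascamp--Lieb plus Bochner route you open Step 1 with is, as you yourself note, a dead end because $\int (L u_i)^2\, d\mu$ is not controlled by the data; since the approximate Euler--Lagrange identity only gives a dual $H^{-1}$-type bound on $L u_i + u_i$, this cannot be promoted to an $L^2$ bound, so this substep should simply be discarded rather than presented as part of the proof. Second, in Step 2 your identity $\int|\nabla u_i - \hat w_i|^2\, d\mu = \int|\nabla u_i|^2\, d\mu + 1 - 2|\bar w_i|$ and the diagonal-dominance argument in Step 3 are both correct and match the paper's conclusion.
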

{ In particular, this lemma implies that the functions $u_i$ are close to orthogonal affine functions.}

\begin{proof}
The proof follows the arguments of \cite{DPF}, we include it for the sake of completeness. 

First, let $T$ be the optimal transport (or Brenier map) \cite{Bre91} sending the standard Gaussian measure onto $\mu$, and define $v_i:= u_i \circ T$. According to the Caffarelli contraction theorem \cite{Caf00}, $\nabla T$ is a symmetric, positive matrix satisfying $\| \nabla T\|_{op} \leq 1$. We then have
$$\int{|\nabla v_i|^2d\gamma} \leq \int{|\nabla u_i|^2 \circ T d\gamma} = \int{|\nabla u_i|^2d\mu}$$
and
$$\int{|\nabla u_i|^2d\mu} \leq (1+\epsilon)\int{u_i^2d\mu} = (1+\epsilon)\int{v_i^2d\gamma} \leq (1+\epsilon)\int{|\nabla v_i|^2d\gamma}.$$
Hence
\begin{equation}
0 \leq \int{(|\nabla u_i|^2 \circ T -|\nabla v_i|^2)d\gamma} \leq \epsilon(1+\epsilon).
\end{equation}
Additionally, since $(\operatorname{I} - \nabla T)^2 \leq \operatorname{I} - (\nabla T)^2$, we have
\begin{align}
\int{|\nabla u_i \circ T - \nabla v_i|^2d\gamma} &= \int{|(\operatorname{I} - \nabla T) (\nabla u_i \circ T)|^2d\gamma} \notag \\
&\leq \int{|\nabla u_i \circ T|^2d\gamma} - \int{|\nabla T ( \nabla u_i \circ T ) |^2d\gamma} \notag \\
&= \int{(|\nabla u_i|^2 \circ T -|\nabla v_i|^2)d\gamma} \leq \epsilon(1+\epsilon).
\end{align}

Note that $\int v_i d\gamma = \int u_i d\mu = 0$.   Since the multivariate Hermite polynomials form an orthogonal basis for $L^2( \gamma)$, we may write
 $$v_i(x) =  w_i \cdot x +  z_i(x),$$
where $w_i\in \mathbb{R}^n$ and  $z_i : \R^n \longrightarrow \R$, satisfying $\int z_i  d\gamma = 0$ and $\int z_i x_j d\gamma =0$ for $j=1, \dots, n$.   Using basic properties of Hermite polynomials,
$$1 + \epsilon \geq \int{|\nabla v_i|^2d\gamma} = |w_i|^2 + \int |\nabla z_i|^2 d\gamma \geq  1+ \frac{1}{2}\int |\nabla z_i|^2 d\gamma.$$
The second inequality is a refinement of the Gaussian Poincar\'e inequality for functions orthogonal to the subspace spanned by constant and linear functions, combined with   $ |w_i|^2 +  \int z_i^2 d\gamma = \int v_i^2 d\gamma = \int u_i^2 d\mu = 1$.  Hence, 
$$\int z_i^2 d\gamma\leq \frac{1}{2}\int |\nabla z_i|^2 d\gamma \leq  \epsilon.$$
In particular, $\int{|\nabla v_i - w_i|^2 d\gamma} \leq 2\epsilon$ and $1-\epsilon\leq |w_i|^2\leq 1$.  Together with the previous estimates,  we have for $\hat w_i := w_i/|w_i|$, 
\begin{equation*}
\int |\nabla u_i - \hat{w}_i|^2 d\mu \leq 3\left(  \int |\nabla u_i\circ T - \nabla v_i|^2 d\gamma+ \int |\nabla v_i-w_i|^2 d\gamma  +  |w_i-\hat{w}_i|^2 \right)\leq 9 \epsilon.
\end{equation*}
As a consequence, for $j\neq i$, we have
$$
|\hat{w}_i \cdot \hat{w}_j| \leq 9\epsilon + 6 \sqrt{ \epsilon(1+\epsilon)} \leq 18 \sqrt{\epsilon}.
$$
Finally, the matrix with coefficients $(\hat{w}_i \cdot \hat{w}_j)_{i,j \leq k}$ is strictly diagonally dominant when $\epsilon < (18 k)^{-2}$, and hence invertible. Thus, $\operatorname{dim}(\operatorname{Span}(\hat{w}_1,..,\hat{w}_k))=k$ as claimed.  
\end{proof}

The starting point to implement Stein's method is the following approximate integration by parts formula for the measure $\mu$ and the approximate minimizers $u_i$: 

\begin{lem} \label{lem_approx_euler}
Let $\mu$ be a probability measure satisfying a Poincar\'e inequality with constant $C_P \leq 1$. For any function $h \in H^1(\mu)$ and function $u$ satisfying $\int{ud\mu} = 0$, $\int{u^2d\mu} = 1$ and $\int{|\nabla u|^2d\mu} \leq 1 + \epsilon$, for some $\epsilon\geq 0$.  We have
$$\int{u h d\mu} - \int{\nabla u \cdot \nabla h d\mu} \leq \sqrt{\epsilon}\left(\int{|\nabla h|^2d\mu}\right)^{1/2}.$$

In particular, this applies for $u_i$ and $\mu$ satisfying the assumptions of Theorem \ref{thm_dpf}.
\end{lem}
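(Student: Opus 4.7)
The plan is to view $u$ as an approximate minimizer of the Rayleigh quotient $\int |\nabla u|^2 d\mu / \int u^2 d\mu$ on the mean-zero subspace, and extract the approximate Euler--Lagrange equation by a second-variation (discriminant) argument, which turns out to give the sharp $\sqrt{\epsilon}$ rate directly.

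First I would reduce to the case $\int h d\mu = 0$: replacing $h$ by $h - \int h d\mu$ changes neither the left-hand side (because $\int u d\mu = 0$) nor the right-hand side (because $\nabla h$ is unchanged). Under this reduction, $u + th$ has mean zero for every $t \in \mathbb{R}$, so the Poincar\'e inequality applied to $u + th$, combined with the assumption $C_P(\mu) \leq 1$, gives
$$\int (u+th)^2 d\mu \leq \int |\nabla(u+th)|^2 d\mu \qquad \forall\, t \in \mathbb{R}.$$
Expanding both sides in $t$, this is equivalent to the quadratic inequality
$$A + 2tB + t^2 C \geq 0 \quad \text{for all } t \in \mathbb{R},$$
where I set $A := \int |\nabla u|^2 d\mu - \int u^2 d\mu$, $B := \int \nabla u \cdot \nabla h d\mu - \int uh d\mu$, and $C := \int |\nabla h|^2 d\mu - \int h^2 d\mu$. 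The hypotheses $\int u^2 d\mu = 1$ and $\int|\nabla u|^2 d\mu \leq 1+\epsilon$ give $0 \leq A \leq \epsilon$, while the Poincar\'e inequality applied to the mean-zero function $h$ together with $C_P(\mu) \leq 1$ gives $C \geq 0$.

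Since the quadratic is nonnegative on all of $\mathbb{R}$, its discriminant must be non-positive: $B^2 \leq AC$. Combining with the bounds on $A$ and $C$ yields
$$B^2 \leq \epsilon \int |\nabla h|^2 d\mu,$$
and the claim follows upon noting that the left-hand side of the lemma equals $-B$, which is bounded above by $|B|$. The degenerate case $C = 0$ is handled directly, since then $A + 2tB \geq 0$ for all $t \in \mathbb{R}$ forces $B = 0$. There is no real obstacle here; the key observation is simply that the Poincar\'e inequality, expanded against a one-parameter family of mean-zero variations $u + th$, is itself quadratic in $t$, and that the tightness of $u$ as an approximate extremizer (encoded by $A \leq \epsilon$) immediately controls the cross term $B$ at the sharp square-root rate through the discriminant. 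The final sentence of the lemma is then automatic, since the previous lemma ensures $C_P(\mu) \leq 1$ under the hypotheses of Theorem \ref{thm_dpf}.
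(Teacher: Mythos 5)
Your proof is correct, and it takes a genuinely different route from the paper's. The paper constructs an auxiliary potential $g$ via Lax--Milgram satisfying $\int uh\,d\mu = \int \nabla h \cdot \nabla g\,d\mu$ for all $h\in H^1(\mu)$, observes $\int \nabla g\cdot\nabla u\,d\mu = 1$ and $\int |\nabla g|^2 d\mu \leq C_P$, and then applies Cauchy--Schwarz to $\int \nabla h \cdot (\nabla g - \nabla u)\,d\mu$ after checking $\int|\nabla g - \nabla u|^2 d\mu \leq C_P - 1 + \epsilon \leq \epsilon$. You instead normalize $h$ to have zero mean (harmless, as you note), apply the Poincar\'e inequality to the one-parameter family $u+th$, and read off the bound $B^2 \leq AC$ from the non-negativity of the resulting quadratic in $t$. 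Both yield exactly the same $\sqrt{\epsilon}\,\|\nabla h\|_{L^2(\mu)}$ estimate. Your discriminant argument is more elementary---no weak formulation, no Lax--Milgram, no auxiliary function---and makes the ``approximate Euler--Lagrange'' interpretation completely transparent, which fits the paper's own framing in Section~1.3. The paper's approach, on the other hand, produces an explicit dual object $g$ (essentially a Stein potential for $\mu$ sourced by $u$), which is a more structural piece of information and connects directly to the Stein-kernel machinery cited from \cite{CFP18, Cou18}; this is why the authors present it that way, even though for the purposes of this single inequality your route is shorter. Your handling of the degenerate case $C=0$ and the reduction to mean-zero $h$ are both correct, and the final sentence about applicability to the $u_i$ indeed follows from the preceding lemma establishing $C_P(\mu)\leq 1$.
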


\begin{proof}
The proof of the lemma is a variant of the argument used in \cite{CFP18, Cou18} to establish integration by parts formula mimicking the Stein identity for measures satisfying a Poincar\'e inequality. 

For any $h : \R \longrightarrow \R$ in the weighted Sobolev space $H^1(\mu)$, we have
$$\left(\int{u h d\mu}\right)^2 \leq \Var_{\mu}(h )\int{u^2d\mu} \leq C_P\int{|\nabla h|^2d\mu}.$$
Hence the original term, viewed as a function of $h$, is a continuous linear form in $H^1(\mu)$, and as an application of the Lax-Milgram theorem there exists a function $g$ such that
$$\int{uh d\mu} = \int{\nabla h \cdot \nabla gd\mu} \hspace{3mm} \forall h \in H^1(\mu); \hspace{5mm} \int{|\nabla h|^2d\mu} \leq C_P.$$
In particular, note that $\int{\nabla g \cdot \nabla u d\mu} = \int{u^2d\mu} = 1$. 

Hence for any$h \in H^1(\mu)$,
$$\int{(uh - \nabla u \cdot \nabla h) d\mu} = \int{\nabla h \cdot (\nabla g - \nabla u)d\mu} \leq \left(\int{|\nabla g - \nabla u|^2d\mu}\right)^{1/2}\left(\int{|\nabla h|^2d\mu}\right)^{1/2}.$$

Finally, we can expand the square and get 
$$\int{|\nabla g - \nabla u|^2d\mu} \leq C_P -2 + 1 + \epsilon \leq \epsilon$$
which concludes the proof. 
\end{proof}

We  shall assume without loss of generality that $p = \int x d\mu = 0$.   

Assume first that $\epsilon < 1/(18 k)^2$.   Let $(\hat{w}_1,..,\hat{w}_k)$ be as in Lemma \ref{lem_dpf}, and consider any orthonormal family $(e_1,..,e_k)$ such that $\operatorname{Span}(e_1,..,e_k)=\operatorname{Span}(\hat{w}_1,..,\hat{w}_k)$.  Let $(\alpha_{ij})_{i,j\leq k}$ be real numbers such that $e_i = \sum_{j\leq k}\alpha_{ij} \hat{w}_j$.  If $k=1$, then we may take $e_1 = \hat{w}_1$.  On the other hand, if $k\geq 2$, we  use  $|\hat{w}_i\cdot\hat{w}_k|\leq 18\sqrt{\epsilon}$ for $i\neq j$ and recall  $\epsilon < 1/(18 k)^2$ to conclude that 
$$
\sum_{j\leq k}\alpha_{ij}^2 \leq (1-18\sqrt{\epsilon})^{-1} \leq  1+  \frac{18 k\sqrt{\epsilon}}{k-1} \leq 1+  \frac{1}{k-1}.
$$
Hence, we always have  $\sum_{j\leq k}\alpha_{ij}^2\leq 2$ for each $i\leq k$.

After suitable change of coordinates, we may assume without loss of generality that the vectors $(e_i)_{i\leq k}$ coincide with the  first $k$ natural basis vectors of $\R^n$.  Hence, from now on, we write $x = (y,z)$ where $y$ is the orthogonal projection of $x$ onto the vector space spanned by the $(e_i)_{i\leq k}$, with $y_i = x \cdot e_i$, and $z$ its projection onto $\operatorname{Span}(e_1,..,e_k)^{\bot}$. Let $\bar{\mu}$ be the distribution of $z$ when $x$ is distributed according to $\mu$, that is $\bar{\mu}(dz) = e^{-W(z)}dz$ with $W(z) = -\log \int_{\operatorname{Span}(e_1,..,e_k)}{e^{-V(y,z)}dy}$. As a consequence of the Pr\'ekop\`a-Leindler theorem, $W$ inherits uniform convexity from $V$, that is $\Hess W \geq \operatorname{I}_{n-k}$ (see for example \cite{BL76}).

Consider $1$-Lipschitz $f : \R^n \longrightarrow \R$; note this ensures $f$ is integrable with respect to both $\mu$ and $\gamma_k\otimes \bar{\mu}$, where $\gamma_k$ is the centered standard Gaussian measure on $\R^k$. For any $z$, there exists a function $h(\cdot, z) : \R^k \longrightarrow \R^k$ satisfying the Poisson equation  
\begin{equation} \label{eq_poisson}
f(y,z) - \int{f(s,z)d\gamma_k(s)} = y \cdot h(y,z) - \operatorname{Tr}(\nabla_yh)(y,z).
\end{equation}
In fact, as pointed out by Barbour \cite{Bar90}, as a consequence of the representation of the Ornstein-Uhlenbeck semigroup via convolution with a Gaussian kernel, the function $h$ is given by 
\begin{align*}
h_i(y,z) &= - \partial_{e_i}\int_0^1{\frac{1}{2t}\int{(f(\sqrt{t}y + \sqrt{1-t}w, z) - f(w,z))d\gamma_k(w)}dt}\\
&= -\int_0^1{\frac{1}{2\sqrt{t(1-t)}}\int{w_i f(\sqrt{t}y + \sqrt{1-t}w, z)d\gamma_k(w)}dt},
\end{align*}
where the second identity follows from the Gaussian integration by parts formula. Hence, by the Jensen and Cauchy-Schwarz inequalities, 
\begin{align*}
|\nabla_y h_i(y,z) |^2 &= \left| \int_0^1{\frac{1}{2\sqrt{1-t}}\int{w_i \nabla_y f(\sqrt{t}y + \sqrt{1-t}w, z)  d\gamma_k(w)}dt} \right|^2\\
&\leq \int_0^1 \frac{1}{2\sqrt{1-t} } \left| \int{w_i \nabla_y f(\sqrt{t}y + \sqrt{1-t}w, z)  d\gamma_k(w)}  \right|^2 dt\\
&\leq \int_0^1 \frac{1}{\sqrt{t(1-t)} }  \int  \left|\nabla_y f(\sqrt{t}y + \sqrt{1-t}w, z) \right|^2 d\gamma_k(w)      dt.
\end{align*}
Similarly, 
\begin{align*}
|\nabla_z h_i(y,z) |^2 &= \left| \int_0^1{\frac{1}{2\sqrt{t(1-t)}}\int{w_i  \nabla_z f(\sqrt{t}y + \sqrt{1-t}w, z)   d\gamma_k(w)}dt} \right|^2\\
&\leq \int_0^1 \frac{\pi  }{4\sqrt{t(1-t)} } \left| \int{w_i  \nabla_z f(\sqrt{t}y + \sqrt{1-t}w, z)   d\gamma_k(w)}  \right|^2 dt\\
&\leq \int_0^1 \frac{ 1  }{\sqrt{t(1-t)} }  \int  \left|\nabla_z f(\sqrt{t}y + \sqrt{1-t}w, z) \right|^2 d\gamma_k(w)      dt.
\end{align*}
Combining the above, we have
\begin{align}
|\nabla h_i(y,z) |^2 = |\nabla_y h_i(y,z) |^2+|\nabla_z h_i(y,z) |^2\leq \int_0^1 \frac{ 1 }{\sqrt{t(1-t)} }  \| f\|^2_{\mathrm{Lip}}      dt\leq \pi . \label{nablaHbound}
\end{align}
The above computation follows the strategy of \cite{CM08, Gau16}. Better regularity bounds on solutions of the Poisson equation have been derived in \cite{GMS, FSX18}, but for our purpose this bound will suffice. 
It follows that $h_i \in H^1(\mu)$, justifying the following manipulations:
\begin{align*}
\int{fd\mu} - \int{fd\gamma_k d\bar{\mu}} &= \int{ \Big( y \cdot h(y,z) - \operatorname{Tr}(\nabla_y h)(y,z)\Big) d\mu} \\
&=  \sum_{i\leq k} \int \left(y_i  h_i(y,z) - e_i \cdot \nabla h_i(y,z) \right) d\mu.
\end{align*}
Now, focusing on the $i$th term in the sum, we expand
\begin{align*}
\int &\left(y_i  h_i(y,z) - e_i \cdot \nabla h_i(y,z) \right) d\mu = \sum_{j\leq k} \alpha_{ij} \int{(\nabla u_j-\hat{w}_j)\cdot \nabla h_i(y,z)d\mu} \\
&\hspace{1cm}+ \sum_{j\leq k} \alpha_{ij} \int{(\hat{w_j}\cdot x-u_j)   h_i(y,z)d\mu} + \sum_{j\leq k} \alpha_{ij} \int{\left(u_j  h_i(y,z) - \nabla u_j \cdot \nabla h_i(y,z) \right)d\mu}. 
\end{align*}
We bound each of the three terms separately.  By Cauchy-Schwarz, Lemma \ref{lem_dpf}, and \eqref{nablaHbound}
\begin{align*}
&\sum_{j\leq k} \alpha_{ij} \int{(\nabla u_j-\hat{w}_j)\cdot \nabla h_i(y,z)d\mu}\\
&\leq \left( \sum_{j\leq k} \alpha_{ij}^2 \right)^{1/2} \left( \sum_{j\leq k } \left( \int{(\nabla u_j-\hat{w}_j)\cdot \nabla h_i(y,z)d\mu}\right)^2 \right)^{1/2} \leq \sqrt{2} \left( k  \pi 9 \epsilon  \right)^{1/2}.
\end{align*}
Similarly, with additional help from the Poincar\'e inequality for $\mu$ and the assumption that $\int x d\mu =\int u_i d\mu=0$,  
\begin{align*}
 \sum_{j\leq k} \alpha_{ij} &\int{(\hat{w_j}\cdot x-u_j)   h_i(y,z)d\mu} 
 \leq \left( \sum_{j\leq k} \alpha_{ij}^2 \right)^{1/2} \left( \sum_{j\leq k } \left(  \int{(\hat{w_j}\cdot x-u_j)   h_i(y,z)d\mu}\right)^2 \right)^{1/2}\\
 &\leq  \sqrt{2} \left( \sum_{j\leq k } \left(  \int|\hat{w_j}-\nabla u_j|^2 d\mu \right) \left( \int | \nabla h_i(y,z)|^2 d\mu\right) \right)^{1/2} \leq  \sqrt{2} \left( k  \pi 9 \epsilon  \right)^{1/2}.
\end{align*}
Finally, by Lemma \ref{lem_approx_euler} and \eqref{nablaHbound}, 
\begin{align*}
&\sum_{j\leq k} \alpha_{ij} \int{\left(u_j  h_i(y,z) - \nabla u_j \cdot \nabla h_i(y,z) \right)d\mu} \\
&\leq \left( \sum_{j\leq k} \alpha_{ij}^2 \right)^{1/2} \left( \sum_{j\leq k } \left(  \int{\left(u_j  h_i(y,z) - \nabla u_j \cdot \nabla h_i(y,z) \right)d\mu} \right)^2 \right)^{1/2} \leq  \sqrt{2} \left( k \pi \epsilon  \right)^{1/2}.
\end{align*}
Combining all of the above estimates with  the Kantorovitch dual formulation of $W_1$ \cite{Vill03}, we have 
$$
W_1(\mu, \gamma_k \otimes \bar{\mu}) = \underset{f  : \|f \|_{\mathrm{Lip}\leq 1}}{\sup} \hspace{1mm} \int{fd\mu} - \int{fd\gamma_k d\bar{\mu}}
  \leq k^{3/2} 7  \sqrt{2  \pi  \epsilon} <  k^{3/2} 18  \sqrt{2   \epsilon} .
$$

To finish the proof, we only need to consider   $\epsilon \geq (18 k)^{-2}$.  In this case, we bypass Lemma \ref{lem_dpf} and   take $(e_1, \dots, e_k)$ to be any orthonormal family in $\R^n$, and define $\bar{\mu}$ in terms of this family, same as above. By the Poincar\'e inequality,  $\Var_{\mu}(x\cdot e_i)\leq 1$ for each $i\leq k$, so it follows that 
$$W_1(\mu, \gamma_k \otimes \bar{\mu}) \leq W_2(\mu, \gamma_k \otimes \bar{\mu}) \leq \sqrt{2 k}  \leq k^{3/2} 18  \sqrt{2   \epsilon} ,$$
where the last inequality holds under the assumption that $\epsilon \geq (18 k)^{-2}$.

\subsection{Proof of Theorem \ref{thm_coord}}

The proof is essentially the same as for Theorem \ref{thm_improved_dpf}, except that our extra assumptions make Lemma \ref{lem_dpf} unnecessary, which allows us to drop the convexity assumption. Without loss of generality, we may assume $\mu$ has its barycenter at the origin. We then take $u_i = \frac{x \cdot e_i}{\sqrt{\Var_{\mu}(x \cdot e_i)}}$ in Lemma \ref{lem_approx_euler} to get
$$\int{x_i h(x) - \partial_i h(x)d\mu} \leq \sqrt{ {\epsilon} }\left(\int{|\nabla h|^2d\mu}\right)^{1/2}$$
for any real-valued smooth test function $h$. We can then introduce the same function $h$ associated to a $1$-Lipschitz function $f$ via the Poisson equation \eqref{eq_poisson}, and the proof continues in the same way as the proof of Theorem \ref{thm_improved_dpf}, but is simpler since we directly conclude:
\begin{align*}
\int{fd\mu} - \int{fd\gamma_k d\bar{\mu}} &= \int{ \Big( y \cdot h(y,z) - \operatorname{Tr}(\nabla_y h)(y,z)\Big) d\mu} \\
&=   \underset{i \leq k}{\sum} \int{ (y_i h_i(y,z) -  \partial_i h_i(y,z))d\mu} \leq k  \sqrt{ \pi \epsilon}.
\end{align*}
Note that bypassing Lemma \ref{lem_dpf} gives improved  dependence on $k$. 

\section{Stability for the logarithmic Sobolev inequality}

\subsection{Proof of Theorem \ref{thm:QstableLSI}}

The proof of Theorem \ref{thm:QstableLSI} follows the strategy for that of Theorem \ref{thm_improved_dpf}, relying on an approximate integration by parts identity for extremizers of the LSI, combined with Stein's method.  However, the details are sufficiently  different that the same argument can not be applied mutatis mutandis.  The following sequence of lemmas provides the necessary ingredients for the proof. 

The following approximate Euler-Lagrange equation for the LSI is the starting point of the proof. It is used as the counterpart of Lemma \ref{lem_approx_euler}. 

\begin{lem}\label{lem:approxELeqn}  Assume $\mu$ satisfies the LSI \eqref{eq:LSImu}, and let $u:\R^n \longrightarrow \R$ satisfy \eqref{eqn:fApproxExLSI} for some $\epsilon\geq 0$. 
For any smooth function $h$ we have
\begin{align*}
& \int{\nabla h \cdot \nabla u d\mu} - \frac{1}{2}\int{h u\log(u^2/\alpha)}d\mu   \\
&\hspace{1cm}\leq \sqrt{\epsilon}\left(\int{|\nabla u|^2d\mu} \right)^{1/2} \left(\int{|\nabla h |^2d\mu} -   \frac{1}{2}\int h^2 \log(u^2/\alpha) d\mu \right)^{1/2} ,
\end{align*}
where  $\alpha := \int{u^2d\mu}$.   
\end{lem}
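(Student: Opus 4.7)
The plan is to recognize the claimed inequality as nothing more than the Cauchy--Schwarz inequality applied to an appropriate symmetric positive semidefinite bilinear form.  For smooth $v, h$, define
$$B(v,h) := \int \nabla v \cdot \nabla h \, d\mu - \frac{1}{2}\int v h \log(u^2/\alpha)\, d\mu.$$
This $B$ is manifestly symmetric and bilinear.  Its evaluation on $(u,h)$ is exactly the left-hand side of the lemma, while the square of the right-hand side is $\epsilon \int|\nabla u|^2 d\mu \cdot B(h,h)$.  So it suffices to establish two things: (i)~$B$ is positive semidefinite, and (ii)~$B(u,u) \leq \epsilon \int |\nabla u|^2 d\mu$.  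The Cauchy--Schwarz inequality for positive semidefinite bilinear forms, namely $|B(u,h)|^2 \leq B(u,u)\, B(h,h)$, then yields the claim after taking square roots.

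For (i), I would exploit the fact that $u^2/\alpha$ is a probability density with respect to $\mu$, and apply the nonnegativity of relative entropy (Gibbs' inequality) to the pair of probability densities $u^2/\alpha$ and $h^2 / \int h^2 d\mu$.  This gives
$$\int h^2 \log(u^2/\alpha)\, d\mu \leq \Ent_\mu(h^2).$$
Combined with the LSI \eqref{eq:LSImu}, which under the standing assumption $C_{\mathrm{LSI}}(\mu) \leq 1$ reads $\Ent_\mu(h^2) \leq 2\int|\nabla h|^2 d\mu$, one obtains $2B(h,h) \geq 0$ for every smooth $h$, as desired.

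Step (ii) is immediate from the definition: $B(u,u) = \int|\nabla u|^2 d\mu - \tfrac{1}{2}\Ent_\mu(u^2)$, and the approximate-optimality hypothesis \eqref{eqn:fApproxExLSI} at once implies $B(u,u) \leq \epsilon \int|\nabla u|^2 d\mu$.  In this way, all the conceptual content of the lemma resides in the choice of the bilinear form $B$; once the Cauchy--Schwarz structure is recognized, the rest is automatic.  I expect no serious analytical obstacle beyond routine integrability considerations, which are harmless given that $\log u$ is $\lambda$-Lipschitz (so $\log(u^2/\alpha)$ grows at most linearly) and $\mu$ has Gaussian tails.
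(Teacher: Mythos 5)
Your proof is correct, and it is essentially the paper's argument in disguise: the paper applies the LSI to $u+th$, bounds $\Ent_\mu((u+th)^2)$ from below via the tangent-line inequality for convexity of entropy (which, as you note, is algebraically the same as Gibbs' inequality applied to the pair $(u+th)^2/\int(u+th)^2\,d\mu$ and $u^2/\alpha$), uses the hypothesis on $u$, and then optimizes over $t$ --- and that optimization over $t$ of a quadratic in $t$ is precisely the Cauchy--Schwarz inequality for your bilinear form $B$. Recognizing the hidden positive semidefinite form and invoking Cauchy--Schwarz directly is a tidier packaging; indeed, the paper's remark immediately following the lemma already records the positivity $B(h,h)\geq 0$ by exactly your Gibbs-plus-LSI argument, so the structure is implicit there too.
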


\begin{rque}
The quantity $\int{|\nabla h|^2d\mu} -   \frac{1}{2}\int h^2 \log(u^2/\alpha) d\mu $ is nonnegative.  Indeed, by the LSI for $\mu$, this quantity is at least
$$
\frac{1}{2}\Ent_{\mu}(h^2) -   \frac{1}{2}\int h^2 \log(u^2/\alpha) d\mu = \frac{1}{2}\int h^2 \log \left( \frac{h^2 / \int h^2 d\mu }{ u^2 / \int u^2 d\mu} \right)  d\mu,
$$
which is proportional to a relative entropy, and therefore nonnegative. 
\end{rque}

\begin{rque}
We emphasize that Lemma \ref{lem:approxELeqn} does not make any convexity assumptions on $\mu$, so may be of independent interest for other applications. 
\end{rque}

\begin{proof}
By convexity of the map $\varphi \longmapsto \Ent_{\mu}(\varphi)$ on nonnegative functions, for $t \geq 0$, it holds that 
\begin{align}
\Ent_{\mu}(\varphi + t \psi) \geq \Ent_{\mu}(\varphi ) + t \int \psi \log \left(\frac{\varphi}{\int \varphi d\mu } \right) d\mu\label{eq:EntLB}
\end{align}
provided $\varphi \geq 0$ and $\varphi + t \psi \geq 0$. 
 
Now, we observe
\begin{align*}
&2 \int |\nabla u|^2 d\mu + 4 t \int \nabla u \cdot \nabla h d\mu + 2 t \int |\nabla h |^2 d\mu \geq \Ent_{\mu}( (u+ t h)^2 ) \\
&\hspace{1cm} \geq \Ent_{\mu}( u^2 ) + t \int (2 u h + t h^2 ) \log \left(u^2/\alpha  \right) d\mu\\
&\hspace{1cm}\geq  2(1-\epsilon)\int |\nabla u|^2 d\mu + t \int (2 uh + t h^2 ) \log \left(u^2/\alpha  \right) d\mu,
\end{align*}
where the first inequality is the LSI for $\mu$ applied  to the function $u + t h$, the second inequality is \eqref{eq:EntLB}, and the third inequality is \eqref{eqn:fApproxExLSI}.  
Rearranging  and dividing by $2 t$ gives 
\begin{align*}
& \epsilon \, t^{-1} \int |\nabla u |^2 d\mu +  t  \left( \int |\nabla h |^2 d\mu   -\frac{1}{2} \int h^2  \log \left(u^2/\alpha  \right) d\mu \right)  \\
 &\hspace{1cm}\geq     \int u h  \log \left(u^2/\alpha  \right) d\mu - 2 \int \nabla u \cdot \nabla h d\mu.
\end{align*}
Optimizing over $t>0$ gives 
\begin{align*}
 &\frac{1}{2} \int u h  \log \left(u^2/\alpha  \right) d\mu -  \int \nabla u \cdot \nabla h d\mu \\
 &\hspace{1cm}\leq 
  \sqrt{\epsilon}\left(\int{|\nabla u |^2d\mu} \right)^{1/2} \left(\int{|\nabla h|^2d\mu} -   \frac{1}{2}\int h^2 \log(u^2/\alpha) d\mu \right)^{1/2}.
\end{align*}
We may now replace $h$ with $-h$ to obtain the desired inequality. 
\end{proof}

We now state the Aida-Shigekawa perturbation theorem for the LSI, which will be needed in the sequel. It will allow us to estimate certain terms that involve an extra weight $u^2$, using the fact that $\log u$ is Lipschitz. The following is a consequence of \cite[Theorem 3.4]{AS}:   
\begin{thm}\label{thm:AS}
Let $\mu$ satisfy \eqref{eq:LSImu}, and take $\mu_F$ to be the probability measure  proportional to $e^F \mu$, where $F$ is $\lambda$-Lipschitz.  There exists a $\tilde{\lambda}>0$, depending only on $\lambda$, for which 
$$
\Ent_{\mu_F}(f^2) \leq 2\tilde{\lambda}\int |\nabla f|^2 d\mu_F.
$$
In particular, $\mu_F$ also satisfies a Poincar\'e inequality with constant $C_P(\lambda)\leq \tilde{\lambda}$. 
\end{thm}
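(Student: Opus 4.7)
My plan is to obtain Theorem~\ref{thm:AS} as a direct specialization of the general perturbation result of Aida and Shigekawa cited as [AS, Theorem~3.4]. The strategy is to verify that, under our hypotheses, all the integrability and regularity inputs of that result are controlled purely in terms of $\lambda$, so that the resulting LSI constant for $\mu_F$ depends only on $\lambda$.

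The first step is to check the exponential integrability of $F$ against $\mu$. Since $\mu$ satisfies the LSI \eqref{eq:LSImu} with $C_{\mathrm{LSI}}(\mu)\leq 1$, Herbst's argument (applied in the form of \eqref{LipschitzConcentrationIneq}) to $\alpha F$, which is $(\alpha\lambda)$-Lipschitz, yields
$$
\int e^{\alpha(F-\int F\,d\mu)}\,d\mu \leq \exp(\alpha^2\lambda^2/2), \qquad \alpha\in\R.
$$
In particular $Z_F := \int e^F d\mu\in(0,\infty)$, so $\mu_F$ is a bona fide probability measure, and $\|e^{\pm(F-\int F\,d\mu)}\|_{L^p(\mu)}$ is bounded by a function of $p$ and $\lambda$ alone. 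Since $\mu_F$ is invariant under replacing $F$ by $F-\int F\,d\mu$, we may assume $\int F\,d\mu=0$ at no cost.

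The second step is to feed these bounds, together with the pointwise estimate $|\nabla F|\leq \lambda$, into [AS, Theorem~3.4]. The conclusion is an LSI for $\mu_F$ with a constant $\tilde{\lambda}$ expressed in terms of the LSI constant of $\mu$ (bounded by~$1$), the gradient bound $\lambda$, and the exponential moments controlled above; all of these depend on $\lambda$ alone, giving $\tilde{\lambda}=\tilde{\lambda}(\lambda)$. The Poincar\'e consequence is then immediate: linearizing the LSI around constants by taking $f=1+\varepsilon g$ with $\int g\,d\mu_F=0$ and expanding to second order in $\varepsilon$ yields a spectral gap bounded by $\tilde{\lambda}$.

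The main obstacle, to the extent there is one, is a matter of bookkeeping rather than substance: one must match the exact hypotheses and normalization in [AS] with those used here, and verify that no hidden quantity (such as an $L^\infty$ norm of $F$ or a higher-order derivative) creeps into the constant. Should that matching be awkward, a fallback route is via hypercontractivity: the LSI for $\mu$ is equivalent to hypercontractivity of the associated Markov semigroup, which is known to be quantitatively stable under a bounded-gradient perturbation of the Dirichlet form, and Gross's theorem would then translate this stability back into an LSI for $\mu_F$ with constant depending only on $\lambda$.
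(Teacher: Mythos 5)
Your approach matches the paper's exactly: Theorem~\ref{thm:AS} is stated there as a direct consequence of the Aida--Shigekawa perturbation theorem [AS, Theorem~3.4], with no further proof given. Your added checks---exponential integrability of $F$ via Herbst to guarantee $Z_F<\infty$ and that all constants entering [AS] depend only on $\lambda$, together with the standard linearization $f=1+\varepsilon g$ to pass from the LSI to the Poincar\'e inequality---are correct and simply make explicit the bookkeeping the paper leaves implicit.
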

Together with \cite[Theorem 1]{FIL}, this yields the following deficit estimate for the Gaussian LSI:
\begin{lem}\label{lem:FIL}
Let the notation of Theorem \ref{thm:AS} prevail.  If $\gamma$ is the standard Gaussian measure on $\R^n$, and $d\mu_F = v^2 d\gamma$, there is a constant $c(\lambda)<1$ for which
$$
\Ent_{\gamma}(v^2) \leq 2 c(\lambda)  \int |\nabla v |^2 d\gamma.
$$
\end{lem}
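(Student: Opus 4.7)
The plan is to chain together the two cited results in a direct way. First, since $\mu$ satisfies the LSI \eqref{eq:LSImu} and $F$ is $\lambda$-Lipschitz, Theorem \ref{thm:AS} tells us that the perturbed measure $\mu_F$ satisfies a Poincar\'e inequality with constant $C_P(\mu_F) \leq \tilde\lambda$, where $\tilde\lambda$ depends only on $\lambda$. Because the hypothesis of the lemma provides $d\mu_F = v^2 d\gamma$, this first step delivers a quantitative spectral gap bound, depending only on $\lambda$, for a probability measure of the form $v^2 \gamma$ on $\R^n$.

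Second, I would invoke \cite[Theorem 1]{FIL}, which converts any Poincar\'e inequality satisfied by a probability measure of the form $g^2 \gamma$ into a strict improvement of the Gaussian logarithmic Sobolev inequality for $g$: there exists a constant $c(C) < 1$ depending only on $C = C_P(g^2 \gamma)$ such that $\Ent_\gamma(g^2) \leq 2 c(C) \int |\nabla g|^2 d\gamma$. Specializing to $g = v$ and substituting the bound $C \leq \tilde\lambda$ from the first step yields the claim with $c(\lambda) := c(\tilde\lambda) < 1$.

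The only thing to verify is that the hypothesis of \cite[Theorem 1]{FIL} is indeed matched by the Poincar\'e bound coming out of Theorem \ref{thm:AS}. Both statements are dimension-free and quantitative in the relevant Poincar\'e constant, so the two compose without any further loss, and the strict inequality $c(\lambda)<1$ follows from the monotone dependence of $c(\cdot)$ on its argument that is already built into \cite{FIL}. Accordingly, I anticipate no substantive obstacle beyond this routine matching of hypotheses.
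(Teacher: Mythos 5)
Your proposal reproduces the paper's own argument, which consists precisely of chaining Theorem \ref{thm:AS} with \cite[Theorem 1]{FIL}; there is no further proof in the paper beyond this citation. There is, however, a hypothesis you gloss over when you speak of the two results ``composing without further loss'': \cite[Theorem 1]{FIL} is stated for \emph{centered} probability measures, i.e.\ it requires $\int x\, v^2\, d\gamma = 0$, and this cannot be dropped. For instance, with $\mu=\gamma$ and $F(x)=a\cdot x$, the measure $\mu_F$ is a translate of $\gamma$ and satisfies a Poincar\'e inequality with constant $1$, yet $\Ent_\gamma(v^2) = 2\int |\nabla v|^2\,d\gamma$ exactly, so no $c(\lambda)<1$ can exist absent a centering assumption. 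This omission is also present in the statement of Lemma \ref{lem:FIL} itself, so your proof inherits the gap rather than creating it; and in the sole place the lemma is invoked---the proof of Lemma \ref{lem_prox_f_linear}---the function $v$ is constructed precisely so that $\int x\, v^2\, d\gamma = 0$, so the application is unaffected. Still, the step you describe as a ``routine matching of hypotheses'' is exactly where the centering condition must be checked, and that check is neither routine nor automatic from Theorem \ref{thm:AS} alone.
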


The following specializes Lemma \ref{lem:approxELeqn} under the hypothesis that $\log u$ is $\lambda$-Lipschitz.  
\begin{lem}\label{lem:ELforg}
Let $u$, $\lambda$, $\epsilon$,  and $\mu$ satisfy the assumptions of Theorem \ref{thm:QstableLSI}.  If $g:\R^n\to \R$ is Lipschitz, satisfying $\int g d\mu = 0$, then  
$$
\int \nabla g \cdot \nabla \log(u) d\mu - \int g |\nabla \log u|^2 d\mu -  \int g \log u \,  d\mu  \leq \|g\|_{\mathrm{Lip}} C(\lambda) \sqrt{\epsilon}, 
$$
where $C(\lambda)$ is a constant depending only on $\lambda$. 
\end{lem}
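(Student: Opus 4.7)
The plan is to apply Lemma~\ref{lem:approxELeqn} with the specific test function $h = g/u$. Since $\log u$ is $\lambda$-Lipschitz, $u$ is strictly positive everywhere, so $h$ is well-defined. A direct computation using $\nabla u = u\nabla \log u$ and $\alpha = \int u^2\, d\mu = 1$ gives
$$\nabla h \cdot \nabla u = \nabla g \cdot \nabla\log u - g\,|\nabla \log u|^2, \qquad \tfrac{1}{2}\, h u \log(u^2/\alpha) = g \log u,$$
so that the LHS of Lemma~\ref{lem:approxELeqn} with this choice of $h$ coincides term by term with the LHS of the target inequality in Lemma~\ref{lem:ELforg}.

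With the LHS identified, it remains to bound the RHS of Lemma~\ref{lem:approxELeqn} by $C(\lambda)\,\|g\|_{\mathrm{Lip}}\,\sqrt{\epsilon}$. The first factor satisfies $\int |\nabla u|^2\, d\mu \leq \lambda^2$ via $|\nabla u| = u|\nabla\log u| \leq \lambda u$ and $\int u^2\, d\mu = 1$. The second factor,
$$\int |\nabla(g/u)|^2\,d\mu - \int (g/u)^2 \log u\,d\mu,$$
I aim to bound by $C(\lambda)\,\|g\|_{\mathrm{Lip}}^2$. The key tools will be: the Aida--Shigekawa perturbation theorem (Theorem~\ref{thm:AS}), which ensures that $\mu_F := u^2\mu$ satisfies a Poincar\'e (and LSI) inequality with constant depending only on $\lambda$; the Bakry--\'Emery Poincar\'e inequality for $\mu$ (with constant $\leq 1$), which combined with $\int g\,d\mu = 0$ yields $\int g^2\,d\mu \leq \|g\|_{\mathrm{Lip}}^2$; and pointwise control on $|\log u|$ and $u^{-1}$ coming from the $\lambda$-Lipschitz assumption together with the normalization $\int u^2\,d\mu = 1$.

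The main obstacle is the weight $u^{-2}$ appearing after the expansion $|\nabla(g/u)|^2 = |\nabla g - g\nabla\log u|^2/u^2$. The $\lambda$-Lipschitz property of $\log u$ only provides a pointwise estimate of the form $u^{-2}(x) \leq u(x_0)^{-2} e^{2\lambda|x-x_0|}$ for a convenient reference point $x_0$ such as the barycenter of $\mu$, with $u(x_0)^{-2}$ controlled from above through $\int u^2\,d\mu = 1$. Translating this pointwise bound into controlled $L^p(\mu)$ norms against the Lipschitz-in-$g$ and $\log u$-controlled integrands proceeds through H\"older's inequality and moment bounds coming from the Gaussian-type concentration of the $1$-log-concave measure $\mu$; the Poincar\'e inequality for $\mu_F$ absorbs the remaining quadratic terms in $g$. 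Once the second factor is bounded by $C(\lambda)\,\|g\|_{\mathrm{Lip}}^2$, collecting the factors from Lemma~\ref{lem:approxELeqn} produces the stated inequality.
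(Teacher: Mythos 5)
Your choice of test function $h=g/u$ in Lemma~\ref{lem:approxELeqn}, the resulting identification of the left-hand side, and the bound $\int |\nabla u|^2 d\mu \le \lambda^2$ all match the paper's proof exactly; this is the right opening move. The difficulty, as you correctly identify, is to bound the second factor
$$\int|\nabla (g/u)|^2 d\mu - \tfrac12\int (g/u)^2\log u^2\, d\mu$$
by $C(\lambda)\|g\|_{\mathrm{Lip}}^2$, and this is where the proposal has a genuine gap.

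The pointwise route $u^{-2}(x)\leq u(x_0)^{-2}e^{2\lambda|x-x_0|}$ with $x_0$ the barycenter of $\mu$ cannot produce a dimension-free constant. Controlling $u(x_0)^{-2}$ from the normalization $\int u^2 d\mu=1$ requires integrating the same pointwise bound, which gives $u(x_0)^{-2}\leq \int e^{2\lambda|x-x_0|}d\mu$. Since $|\cdot - x_0|$ is $1$-Lipschitz, Herbst yields $\int e^{2\lambda|x-x_0|}d\mu \leq \exp\left(2\lambda\int|x-x_0|d\mu + 2\lambda^2\right)$, and $\int|x-x_0|d\mu$ is of order $\sqrt{n}$ for a $1$-log-concave measure on $\R^n$; the bound therefore blows up with the dimension. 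The same dimension dependence contaminates any subsequent H\"older/moment estimate built on $e^{2\lambda|x-x_0|}$. Since the whole point of Theorem~\ref{thm:QstableLSI} is a dimension-free estimate, this route does not close. The paper sidesteps the issue by never using a pointwise bound on $u^{-2}$: it applies Herbst directly to the $\lambda$-Lipschitz \emph{scalar} function $\log u$, giving the dimension-free moment bound $\int u^{-t}d\mu\leq e^{t\lambda^2(1+t/2)}$ (eq.~\eqref{eq:ExpLogf}), together with the variational (Gibbs) inequality for entropy plus the LSI for $\mu$ to control the term $-\tfrac12\int h^2\log u^2 d\mu$ (eq.~\eqref{Hbound}). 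You should replace the pointwise argument with these two global estimates. A minor additional slip: the Aida--Shigekawa Poincar\'e constant that you actually need is for $u^{-2}\mu$ (to control $\int g^2 u^{-2}d\mu$), not $u^2\mu$ as written; of course Theorem~\ref{thm:AS} covers both, but the statement in the proposal should be corrected.
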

\begin{proof}
We may assume without loss of generality that $\|g\|_{\mathrm{Lip}}\leq 1$. 

Apply Lemma \ref{lem:approxELeqn} to the test function $h=g/u$.  This gives
\begin{align}
&\int \nabla g \cdot \nabla \log(u) d\mu - \int g |\nabla \log u|^2 d\mu -  \int g \log u \,  d\mu \notag\\
&\leq \sqrt{\epsilon}\left(\int{|\nabla u|^2d\mu} \right)^{1/2} \left(2 \int{|\nabla g |^2u^{-2} d\mu}+ 2 \int{g^2 |\nabla \log u |^2 d\mu} -   \frac{1}{2}\int (g/u)^2 \log(u^2) d\mu \right)^{1/2} \notag\\
&\leq \sqrt{\epsilon}\lambda \left(2 \int{ u^{-2} d\mu}+ 2 \lambda^2 \int{g^2   d\mu} -   \frac{1}{2}\int (g/u)^2 \log(u^2) d\mu \right)^{1/2} \label{lastLine}
\end{align}
Now, we claim that for any smooth enough $h$,  we have
\begin{align}
-\frac{1}{2}\int{h^2 \log u^2 d\mu}\leq \int{|\nabla h|^2d\mu} + 2\lambda^2\int{h^2d\mu}.\label{Hbound}
\end{align}
 From the classical entropy inequality, we have
\begin{align*}
-\int{h^2 \log u^2 d\mu} &\leq \Ent_{\mu}(h^2) + \int{h^2d\mu} \times \log \int{e^{-2\log u}d\mu}\\
&\leq 2\int{|\nabla h|^2d\mu} + \int{h^2d\mu} \times \log \int{e^{-2\log u}d\mu}.
\end{align*}
Now, we apply the concentration inequality \eqref{LipschitzConcentrationIneq}.
In particular, since $\log u$ is assumed to be $\lambda$-Lipschitz
$$1 = \int{u^2d\mu} \leq \exp\left(2\int{\log u d\mu} + 2\lambda^2\right) ~~~\Longrightarrow  ~~~ - \int{\log u \, d\mu}\leq \lambda^2.$$
On the other hand, using this together with \eqref{LipschitzConcentrationIneq} gives, for any $t >0$, 
\begin{align}
\int u ^{-t}d\mu = \int{e^{-t \log u}d\mu} \leq \exp\left(-t\int{\log u \, d\mu} + (t\lambda)^2/2\right) \leq e^{t \lambda^2(1+t/2)},\label{eq:ExpLogf}
\end{align}
which leads to \eqref{Hbound} by taking $t=2$. 

Applying these estimates to \eqref{lastLine} gives 
\begin{align*}
&\int \nabla g \cdot \nabla \log(u) d\mu - \int g |\nabla \log u|^2 d\mu -  \int g \log u \,  d\mu  \\
&\leq \sqrt{\epsilon}\lambda \left(2 e^{4 \lambda^2} + 2 \lambda^2 \int{g^2   d\mu} 
+\int{|\nabla (g/u)|^2d\mu} + 2\lambda^2\int{g^2u^{-2} d\mu}
\right)^{1/2}\\
&\leq \sqrt{\epsilon}\lambda \left(4 e^{4 \lambda^2} + 2 \lambda^2   + 4\lambda^2\int{g^2u^{-2} d\mu}
\right)^{1/2},
\end{align*}
where the last line made use of the Poincar\'e inequality $\int{g^2   d\mu} \leq \int{|\nabla g|^2   d\mu}\leq 1$.  Now, since $\log u$ is $\lambda$-Lipschitz, the measure $u^{-2} \mu$ satisfies a Poincar\'e inequality with constant $C_P(\lambda)$.  Hence, 
\begin{align*}
\int g^2u^{-2} d\mu &\leq C_P(\lambda)\int |\nabla g|^2 u^{-2}d\mu + \left( \int g u^{-2} d\mu\right)^2 \left(  \int u^{-2} d\mu \right)^{-1}\\
&\leq C_P(\lambda) e^{4\lambda^2} + \left( \int g u^{-2} d\mu\right)^2 .
\end{align*}
By Cauchy-Schwarz, the Poincar\'e inequality for $\mu$ and \eqref{eq:ExpLogf}, we have
$$
 \left( \int g u^{-2} d\mu\right)^2 \leq \int g^2 d\mu \times \int u^{-4} d\mu \leq e^{12 \lambda^2},
$$
which completes the proof. 
\end{proof}

The next lemma quantifies the proximity between $\log u$ and an affine function. It is used as the counterpart to Lemma \ref{lem_dpf}. { This step is more complicated than for the Poincar\'e inequality, since in this case stability for the Gaussian functional inequality is a much more difficult problem, as we cannot simply use a spectral decomposition of the function. }

\begin{lem} \label{lem_prox_f_linear}
Let $u$, $\lambda$, $\epsilon$,  and $\mu$ be as in Theorem \ref{thm:QstableLSI}.  There exists $p\in \R^n$ and constants $C_1(\lambda)$ and $C_2(\lambda)$, depending only on $\lambda$,  such that 
\begin{align}
\int{|\nabla \log u - p/2|^2u^2d\mu} &\leq C_1(\lambda) \epsilon\int{|\nabla u|^2d\mu};\label{L2nabla_f2mu} \\
\Var_{u^2\mu}(\log u - p\cdot x/2) &\leq C_2(\lambda) \epsilon\int{|\nabla u|^2d\mu}.\label{Var_f2mu}
\end{align}
\end{lem}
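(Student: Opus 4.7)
I would follow the strategy behind Lemma \ref{lem_dpf}: transport $u$ to the Gaussian via Caffarelli's contraction theorem, exploit the resulting approximate optimality in the Gaussian logarithmic Sobolev inequality, and transfer back. The Hermite-basis decomposition used in Lemma \ref{lem_dpf} will be replaced by the improved Gaussian LSI of Lemma \ref{lem:FIL}, and the variance bound \eqref{Var_f2mu} will be deduced from the gradient bound \eqref{L2nabla_f2mu} via the Poincar\'e inequality for $u^2 \mu$ supplied by Theorem \ref{thm:AS}.

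Concretely, let $T$ denote the Brenier map pushing $\gamma$ (the standard Gaussian on $\R^n$) to $\mu$, and set $v := u \circ T$. Since $\|\nabla T\|_{\mathrm{op}} \leq 1$ by Caffarelli, $\log v$ remains $\lambda$-Lipschitz and $\int |\nabla v|^2 d\gamma \leq \int |\nabla u|^2 d\mu$, while $\int v^2 d\gamma = 1$ and $\Ent_\gamma(v^2) = \Ent_\mu(u^2)$. Hypothesis \eqref{eqn:fApproxExLSI} therefore yields a Gaussian LSI deficit estimate
\[
2 \int |\nabla v|^2 d\gamma - \Ent_\gamma(v^2) \leq 2\epsilon \int |\nabla u|^2 d\mu,
\]
and the matrix inequality $(I - \nabla T)^2 \leq I - (\nabla T)^2$ exploited in Lemma \ref{lem_dpf} produces $\int |\nabla u \circ T - \nabla v|^2 d\gamma \leq \epsilon \int |\nabla u|^2 d\mu$.

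To identify the candidate $p$ in the statement, I would take $p := \int x v^2 d\gamma$, since the Carlen extremizer whose pushed measure $v^2 \gamma$ has mean $p$ is precisely $e^{(p/2) \cdot x - |p|^2/4}$. A short Talagrand $T_2$ argument together with the pointwise bound $|\nabla v| \leq \lambda v$ gives $|p| \leq 2\lambda$. Setting $\tilde v(x) := v(x+p)\, e^{-(p/2) \cdot x - |p|^2/4}$, a direct computation shows that $\tilde v^2 \gamma$ has mean zero, $\log \tilde v$ is $(\lambda + |p|/2)\leq 2\lambda$-Lipschitz, the LSI deficit is invariant under this translation, and crucially $\int |\nabla \tilde v|^2 d\gamma = \int |\nabla v - (p/2) v|^2 d\gamma$. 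Applying Lemma \ref{lem:FIL} to the mean-zero, log-Lipschitz function $\tilde v$ produces $\Ent_\gamma(\tilde v^2) \leq 2 c(2\lambda) \int |\nabla \tilde v|^2 d\gamma$ with $c(2\lambda) < 1$; combined with the deficit bound, this yields
\[
\int |\nabla v - (p/2) v|^2 d\gamma \leq \frac{\epsilon}{1 - c(2\lambda)} \int |\nabla u|^2 d\mu.
\]

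To conclude \eqref{L2nabla_f2mu}, I would rewrite $\int |\nabla \log u - p/2|^2 u^2 d\mu = \int |\nabla u \circ T - (p/2) v|^2 d\gamma$, insert $\pm \nabla v$ inside the norm, and apply the triangle inequality to the two gradient estimates above; this gives the bound with $C_1(\lambda) = 2 + 2(1 - c(2\lambda))^{-1}$. For \eqref{Var_f2mu}, the measure $u^2 \mu$ is proportional to $e^F \mu$ with $F = 2 \log u$ being $2\lambda$-Lipschitz, so Theorem \ref{thm:AS} supplies a Poincar\'e inequality for $u^2 \mu$ with some constant $C_P(\lambda)$; applying this to the test function $\log u - p\cdot x/2$ and combining with \eqref{L2nabla_f2mu} yields the variance bound with $C_2(\lambda) = C_P(\lambda) C_1(\lambda)$. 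The main obstacle is the translation step: Carlen's family of Gaussian translates is precisely what prevents a direct application of Lemma \ref{lem:FIL} to $v$, so the vector $p$ must be carefully identified and removed via the shifted function $\tilde v$, all while keeping the log-Lipschitz constant bounded uniformly in $\epsilon$.
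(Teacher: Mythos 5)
Your proposal follows essentially the same route as the paper: push forward via Caffarelli's contraction, define $p$ as the barycenter of the pushed-forward density, recenter by exponential tilting so that Lemma~\ref{lem:FIL} (Aida--Shigekawa plus Fathi--Indrei--Ledoux) applies, then transfer back with $(I-\nabla T)^2 \leq I - (\nabla T)^2$, and finish \eqref{Var_f2mu} via the perturbed Poincar\'e inequality. The notational difference (you define $v = u\circ T$ and shift to $\tilde v$; the paper defines the shifted function directly) is immaterial, and your computation of the invariance of the LSI deficit under the tilting, the bound $|p|\leq 2\lambda$, and the $2\lambda$-Lipschitz bound on $\log\tilde v$ are all correct.

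One small thing you gloss over: the matrix inequality alone gives only
$\int |\nabla u\circ T - \nabla v|^2\, d\gamma \leq \int |\nabla u|^2\, d\mu - \int |\nabla v|^2\, d\gamma$;
to conclude this is $\leq \epsilon\int |\nabla u|^2\, d\mu$ you also need the reverse bound $\int |\nabla v|^2\, d\gamma \geq (1-\epsilon)\int |\nabla u|^2\, d\mu$, which comes from combining the Gaussian LSI $\Ent_\gamma(v^2)\leq 2\int |\nabla v|^2\, d\gamma$ with $\Ent_\gamma(v^2)=\Ent_\mu(u^2)\geq 2(1-\epsilon)\int |\nabla u|^2\, d\mu$. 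The paper makes this explicit (its inequality \eqref{revEst}); you have all the ingredients in hand via your "deficit estimate," but the logic as written attributes the conclusion solely to the matrix inequality. This is easily repaired.
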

\begin{proof}
Let $T$ be the optimal transport map sending the standard Gaussian measure onto $\mu$ and define 
\begin{align}
p := \int{\xi u( T(\xi))^2d\gamma(\xi)} = 2\int u (T(\xi)) \nabla T(\xi) \nabla u (T(\xi)) d\gamma(\xi), \label{pDef}
\end{align}
where the second identity follows from   Gaussian integration by parts.  The Caffarelli contraction theorem states that $T$ is $1$-Lipschitz. Define $v(x) = u(T(x+p) )e^{-p\cdot x/2 - |p|^2/4}$. We have
$$\int{v^2d\gamma} = \int{u (T(x+p))^2e^{-|x+p|^2/2}(2\pi)^{-n/2}dx} = \int{u(T(\xi))^2d\gamma(\xi)} = \int{u^2d\mu} = 1;$$
$$\int{xv^2d\gamma} = \int{x u(T(x+p))^2e^{-|x+p|^2/2}(2\pi)^{-n/2}dx} = \int{(\xi-p)u(T(\xi))^2d\gamma(\xi)} = 0.$$
Hence $v^2d\gamma$ is a centered probability measure. Moreover, since $\log u(T(x+p))$ is $\lambda$-Lipschitz, the measure $v^2d\gamma$ satisfies a Poincar\'e inequality with a constant $C_P(\lambda)$ by Theorem \ref{thm:AS}. 

We have $\Ent_{\gamma}(v^2) = \Ent_{\mu}(u^2) - |p|^2/2$ and, using the fact that $T$ is $1$-Lipschitz and the identity \eqref{pDef}, 
$$\int{|\nabla v|^2d\gamma} = \int{|\nabla T(x+p) (\nabla \log u)(T(x+p)) - p/2|^2v^2d\gamma} \leq \int{|\nabla u|^2d\mu} - |p|^2/4.$$

Hence the deficit in the Gaussian LSI for the probability measure $v^2d\gamma$ is smaller than $2\epsilon\int{|\nabla u|^2d\mu}$. By Lemma \ref{lem:FIL}, this ensures that 
\begin{align}
\int{|\nabla (\log u \circ T) - p/2|^2(u^2\circ T)d\gamma} = \int{|\nabla v|^2d\gamma} \leq C_0(\lambda)\epsilon \int{|\nabla u|^2d\mu}.\label{boundFirstTerm}
\end{align}
In a different direction, we use the Gaussian LSI to observe that 
\begin{align}
(1-\epsilon) \int |\nabla u|^2 d\mu &\leq \frac{1}{2 } \Ent_{\mu}(u^2) \notag\\
&= \frac{1}{2} \left( \Ent_{\gamma}(v^2) + |p|^2/2 \right) 
\leq  \left( \int |\nabla v|^2 d\gamma + |p|^2/4 \right).\label{revEst}
\end{align}
Now, the proof continues along similar lines to that of Lemma \ref{lem_dpf}.  First, we bound
\begin{align*}
&\frac{1}{2}\int{|\nabla \log u - p/2|^2u^2d\mu} \\
&\leq \int | \nabla \log(u\circ T)-p/2  |^2 (u^2\circ T)d\gamma  + \int |(\nabla \log u)\circ T - \nabla \log(u\circ T)|^2 (u^2\circ T)d\gamma.
\end{align*}
The first term on the RHS is controlled by \eqref{boundFirstTerm}.  The second term is bounded as 
\begin{align}
 &\int |(\nabla \log u)\circ T - \nabla \log(u\circ T)|^2 (u^2\circ T)d\gamma\notag\\
 &=\int |( I-\nabla T)(\nabla \log u)\circ T |^2 (u^2\circ T)d\gamma\notag\\
  &\leq \int | (\nabla \log u)\circ T |^2 (u^2\circ T)d\gamma - \int |\nabla T(\nabla \log u)\circ T |^2 (u^2\circ T)d\gamma \label{psdIneq}\\
    &= \int | \nabla u |^2  d\mu - \left(\int |\nabla v|^2 d\gamma + |p|^2/4 \right) \label{useVdefn} \\
    &\leq \epsilon \int{|\nabla u|^2d\mu}, \label{useRevIneq}
\end{align}
where \eqref{psdIneq} follows since $(I-\nabla T)^2 \leq I - (\nabla T)^2$,  \eqref{useVdefn} follows by definition of $v$,    and \eqref{useRevIneq} is due to  \eqref{revEst}.  This establishes \eqref{L2nabla_f2mu}.   Since $\log u$ is Lipschitz, the measure $u^2\mu$ satisfies a Poincar\'e inequality with constant depending only on $\lambda$ by Theorem \ref{thm:AS}, so that $\Var_{u^2\mu}(\log u - p\cdot x/2) \leq C_2(\lambda)\epsilon$ as desired. 

\end{proof}

Combining these estimates leads to the following approximate integration by parts formula, which is the crucial estimate we need:
\begin{lem}\label{lem:approxExpLinear}
Let $u$, $\lambda$, $\epsilon$,  and $\mu$ satisfy the assumptions of Theorem \ref{thm:QstableLSI}, and let $p\in \R^n$ be as in Lemma \ref{lem_prox_f_linear}.  For any  Lipschitz function  $g$, we have
\begin{align}
\int{\left(g \times   \left(x-\langle x\rangle_{\mu} \right) \cdot p - \nabla g \cdot p\right) d\mu} \leq   \|g\|_{\mathrm{Lip}}C(\lambda)  \sqrt{\epsilon},\label{eq:ApproxIntByParts}
\end{align}
where $C(\lambda)$ is a constant depending only on $\lambda$, and $\langle x\rangle_{\mu} := \int x d\mu$.  
\end{lem}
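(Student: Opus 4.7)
The plan is to combine the approximate Euler--Lagrange identity of Lemma \ref{lem:ELforg} with the linearization estimates of Lemma \ref{lem_prox_f_linear}. First I would reduce to the mean-zero case $\int g\,d\mu = 0$: replacing $g$ by $g - \int g\,d\mu$ preserves $\|g\|_{\mathrm{Lip}}$ and, since $\int (x - \langle x\rangle_\mu)\,d\mu = 0$, leaves the left-hand side of \eqref{eq:ApproxIntByParts} unchanged. Applying Lemma \ref{lem:ELforg} to both $g$ and $-g$ then delivers the two-sided bound
\begin{equation*}
\left| \int \nabla g \cdot \nabla \log u \, d\mu - \int g |\nabla \log u|^{2} d\mu - \int g \log u \, d\mu \right| \leq C(\lambda) \sqrt{\epsilon} \|g\|_{\mathrm{Lip}}.
\end{equation*}

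Next, I would substitute the decompositions $\nabla \log u = p/2 + \delta_{1}$ and $\log u = p\cdot x /2 + c + \delta_{2}$, where $c$ is chosen to be the $u^{2}\mu$-mean of $\log u - p \cdot x /2$ so that Lemma \ref{lem_prox_f_linear} provides $\|\delta_{1} u\|_{L^{2}(\mu)}^{2} + \|\delta_{2} u\|_{L^{2}(\mu)}^{2} \leq C(\lambda)\epsilon$ (absorbing $\int |\nabla u|^{2} d\mu \leq \lambda^{2}$), and expand $|\nabla \log u|^{2} = |p|^{2}/4 + p\cdot \delta_{1} + |\delta_{1}|^{2}$. Since $\int g\, d\mu = 0$, the constants $|p|^{2}/4$ and $c$ die on integration, so the quantity inside the absolute value above reorganizes as
\begin{equation*}
\tfrac{1}{2} \int \nabla g \cdot p\, d\mu - \tfrac{1}{2} \int g\, p\cdot x\, d\mu + \int \nabla g \cdot \delta_{1}\, d\mu - \int g\, p\cdot \delta_{1}\, d\mu - \int g|\delta_{1}|^{2}\, d\mu - \int g \delta_{2}\, d\mu.
\end{equation*}
If I can show the last four error terms are each $O(\sqrt{\epsilon}\|g\|_{\mathrm{Lip}})$, the two-sided bound yields $|\int g\, p\cdot x\, d\mu - \int \nabla g \cdot p\, d\mu| \leq C(\lambda)\sqrt{\epsilon}\|g\|_{\mathrm{Lip}}$, and inserting $\langle x\rangle_{\mu}$ is free by the mean-zero reduction.

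The main obstacle is achieving the rate $\sqrt{\epsilon}$ in these error terms; Lemma \ref{lem_prox_f_linear} controls $\delta_{i}$ only in the weighted norm $L^{2}(u^{2}\mu)$, and a naive Cauchy--Schwarz against $d\mu$ would transfer the weight at the cost of a further square root and yield only the suboptimal rate $\epsilon^{1/4}$. The fix is to pair each error as $\int F \delta_{i}\, d\mu = \int (F/u)(\delta_{i} u)\, d\mu$ and apply Cauchy--Schwarz to this factorization: the factor $\|\delta_{i} u\|_{L^{2}(\mu)}$ is exactly the $O(\sqrt{\epsilon})$ quantity from Lemma \ref{lem_prox_f_linear}, while $\|F/u\|_{L^{2}(\mu)}$ is tamed through the inverse-moment bound $\int u^{-t}\, d\mu \leq e^{t\lambda^{2}(1 + t/2)}$ of \eqref{eq:ExpLogf}. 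For the error terms in which $F$ contains $g$, I would invoke Gaussian concentration of $g$ under $\mu$ via Herbst applied to $C_{\mathrm{LSI}}(\mu) \leq 1$ to get $\int g^{4}\, d\mu \leq C \|g\|_{\mathrm{Lip}}^{4}$, and combine with Cauchy--Schwarz $\int g^{2}/u^{2}\, d\mu \leq (\int g^{4}\, d\mu)^{1/2}(\int u^{-4}\, d\mu)^{1/2}$. The quadratic term $\int g|\delta_{1}|^{2}\, d\mu$ is handled by first extracting one $|\delta_{1}| \leq 2\lambda$ before applying the same weighted Cauchy--Schwarz.
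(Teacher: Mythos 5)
Your proposal is essentially the same argument as the paper's, organized a bit differently. The paper groups the terms in Lemma~\ref{lem:ELforg} into two auxiliary quantities (one controlled by \eqref{Var_f2mu}, one by \eqref{L2nabla_f2mu}) and then sums the resulting one-sided estimates, whereas you substitute the affine decompositions $\nabla\log u = p/2 + \delta_1$ and $\log u = p\cdot x/2 + c + \delta_2$ directly into the Euler--Lagrange quantity and bound each of the four resulting error terms; the two reorganizations are equivalent, and both hinge on exactly the same weighted Cauchy--Schwarz trick $\int F\delta_i\,d\mu = \int(F/u)(\delta_i u)\,d\mu$ to reach the $\sqrt{\epsilon}$ rate, which you correctly identify as the crux. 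The only substantive deviation is in how $\int g^2 u^{-2}d\mu$ is controlled: you propose Herbst concentration ($\int g^4\,d\mu \le C$ since $C_{\mathrm{LSI}}(\mu)\le 1$) followed by Cauchy--Schwarz against $\int u^{-4}d\mu$, whereas the paper (in Lemma~\ref{lem:ELforg}, whose bound it then reuses) applies the Aida--Shigekawa Poincar\'e inequality for the tilted measure $u^{-2}\mu$, with $(\int g u^{-2}d\mu)^2$ handled by Cauchy--Schwarz. Both yield a constant depending only on $\lambda$, so the substitution is harmless; it even has the minor advantage of not invoking Theorem~\ref{thm:AS} for a second weight. Your handling of the quadratic term $\int g|\delta_1|^2d\mu$ (extracting the $L^\infty$ bound $|\delta_1|\le 2\lambda$ before the weighted Cauchy--Schwarz) is also sound, as is your observation that the choice of centering constant $c$ should be the $u^2\mu$-mean so that \eqref{Var_f2mu} applies directly --- a point at which the paper's own write-up, by taking the $\mu$-mean, is slightly imprecise, though harmlessly so because $\int g\,d\mu=0$ makes the constant immaterial.
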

\begin{rque}\label{rmk:Measures_ut}
To modify Theorem \ref{thm:QstableLSI} for measures $u^t \mu$ along the lines of Remark \ref{Rmk_utdmu}, one should modify Lemma \ref{lem:ELforg} by repeating the proof mutatis mutandis, except one should consider the test function $h=g u^{t-1}$, rather than $h=g/u$.   The following proof can then be suitably modified to yield an approximate  integration by parts formula \eqref{eq:ApproxIntByParts} for the measure $u^t \mu$.  Lemma \ref{lem_prox_f_linear} does not need to be modified. 
\end{rque}

\begin{proof}
Since the statement to prove is invariant to adding a constant to $g$, we assume without loss of generality that $\int g d\mu  =0$, and that $\|g\|_{\mathrm{Lip}}\leq 1$.  Throughout, we let $C(\lambda)$ denote a constant depending only on $\lambda$ which may change line to line. 

Letting $\beta = \int (\log u - x\cdot p/2)d\mu$, we have by Cauchy-Schwarz
\begin{align*}
\frac{1}{2}\int g x\cdot p d\mu - \int g \log u\, d\mu &= \int g (\beta - \log u + x\cdot p/2) d\mu\\
&\leq \left( \int g^2 u^{-2}d\mu \right)^{1/2} \Var_{u^2 \mu}\Big( \log u -  x\cdot p/2  \Big)^{1/2}\\
&\leq  C(\lambda) \sqrt{\epsilon},
\end{align*}
where the last line follows from \eqref{Var_f2mu}, the fact that $\int |\nabla u|^2 d\mu = \int |\nabla \log u|^2 u^2 d\mu \leq \lambda^2$, and the estimate $\int g^2 u^{-2}d\mu\leq C(\lambda)$ established in the final steps of the proof of Lemma \ref{lem:ELforg}. 

Next, we write 
\begin{align*}
&\int \nabla g \cdot \nabla \log u \, du - \int g |\nabla \log u|^2 d\mu - \frac{1}{2}\int \nabla g \cdot p d\mu + \frac{1}{2} \int g\,  p \cdot \nabla \log u d\mu\\
&= \int \nabla g \cdot (\nabla \log u - p/2) d\mu  + \int g \nabla \log u \cdot (p/2 - \nabla \log u )d\mu \\
&\leq \left( \left( \int |\nabla g|^2u^{-2} d\mu \right)^{1/2} + \left( \int g^2 |\nabla u |u^{-2} d\mu \right)^{1/2}\right) \left( \int |\nabla \log u - p/2|u^2 d\mu \right)^{1/2}  \\
&\leq \left( \left( \int u^{-2} d\mu \right)^{1/2} + \lambda \left( \int g^2 u^{-2} d\mu \right)^{1/2}\right) \left( \int |\nabla \log u - p/2|u^2 d\mu \right)^{1/2}  \\
&\leq C(\lambda) \sqrt{\epsilon},
\end{align*}
where the final inequality follows similarly to before, except using \eqref{L2nabla_f2mu}.  

Summing the estimates and applying Lemma \ref{lem:ELforg}, we have
\begin{align*}
 \int \Big( g \times (x- \langle x\rangle_{\mu})\cdot p - \nabla g \cdot p \Big) d\mu +   \int g\,  p \cdot \nabla \log u d\mu  \leq C(\lambda) \sqrt{\epsilon},
\end{align*}
where the $\langle x\rangle_{\mu}$ was inserted using the assumption that $\int g d\mu = 0$.  Thus, it only remains to show that the error term is small.  To this end, we again use $\int g d\mu = 0$ to write 
\begin{align*}
\left|   \int g\,  p \cdot \nabla \log u d\mu \right| &= \left|   \int g\,  p \cdot (\nabla \log u -p/2) d\mu \right| \\
&\leq |p| \left(\int g^2  u^{-2}d\mu \right)^{1/2}\left( \int |\nabla \log u - p/2|u^2 d\mu \right)^{1/2} \\
&\leq C(\lambda)\sqrt{\epsilon},
\end{align*}
which follows from similar estimates as above, plus the fact that $|p|^2 \leq 4 \int |\nabla u|^2 d\mu = 4 \int |\nabla \log u|u^2 d\mu \leq  4 \lambda^2$, where the first inequality was observed in the proof of Lemma \ref{lem_prox_f_linear}. 
\end{proof}

Combining this last lemma and Stein's method, we now prove  Theorem \ref{thm:QstableLSI}.
 

\begin{proof}[Proof of Theorem \ref{thm:QstableLSI}] Since the statement to prove is translation invariant, we assume $\int x d\mu=0$.   We assume first that   $\epsilon \leq  1/(4 C_1(\lambda))$, where $C_1(\lambda)$ is as defined in Lemma \ref{lem_prox_f_linear}.  The same lemma ensures existence of $p\in \R^n$ such that 
$$
\int{|\nabla \log u - p/2|^2u^2d\mu} \leq C_1(\lambda) \epsilon\int{|\nabla u|^2d\mu}.
$$
Thus, using the assumption that $\epsilon \leq  1/(4 C_1(\lambda))$, we apply the elementary inequality $|A-B|^2\geq \frac{1}{2}|A|^2-|B|^2$ to the above to conclude 
$$|p|^2 \geq  (2 - \epsilon\, 4 C_1(\lambda))\int{|\nabla u|^2d\mu}\geq    \int{|\nabla u|^2d\mu}.$$
Henceforth, we let $C(\lambda)$ denote a constant depending on $\lambda$, which may change from line to line.   The vector $p$ above is the same as in Lemma \ref{lem:approxExpLinear}, so we apply it and combine with the above estimate on $|p|$ to find for  $e := p/|p|$, 
$$\int{(g \, x\cdot e - \nabla g \cdot e)d\mu} \leq \| g\|_{\mathrm{Lip}}C(\lambda) \sqrt{\epsilon}\left(\int{|\nabla u|^2d\mu}\right)^{-1/2} ,$$
holding for any  Lipschitz  $g: \R^n \longrightarrow \R$. 

Now, we implement Stein's method following the proof of Theorem \ref{thm_improved_dpf}. In particular, we begin by writing $x = (y,z)$ where $y$ is the orthogonal projection of $x$ onto $e$, and $z$ its projection onto $e^{\bot}$.   
Consider $1$-Lipschitz $f : \R^n \longrightarrow \R$. 
For any $z\in \R^{n-1}$, there exists a function $g(\cdot, z) : \R \longrightarrow \R$ satisfying $$f(y,z) - \int_{\operatorname{Span}(e)}{f(s,z)d\gamma_{0,e}(s)} = y   g(y,z) - \partial_y g (y,z),$$
where $\gamma_{0,e}$ is the centered standard Gaussian measure on $\operatorname{Span}(e)$. 

The function $g$ is measurable and satisfies $\|g \|_{\mathrm{Lip}}\leq \sqrt{\pi}$, as already shown in \eqref{nablaHbound}.  Hence, we integrate with respect to $\mu$ to conclude
\begin{align*}
\int{f d\mu} - \int{ f d\gamma_{0,e} d\bar{\mu}} &= \int{ \Big( y   g(y,z) -  \partial_y g(y,z)\Big) d\mu} \\
&=    \int \left(  g\times   (x\cdot e) - e \cdot \nabla g  \right) d\mu  \leq C(\lambda) \sqrt{\epsilon} \left(\int{|\nabla u |^2d\mu}  \right)^{-1/2} .
\end{align*}
Since $f$ was an arbitrary 1-Lipschitz function, the theorem follows from the Kantorovich dual formulation of $W_1$, provided $\epsilon \leq  1/(4 C_1(\lambda))$.

Now, by the triangle inequality for $W_1$ and simple variance bounds, it is easy to see that 
$W_1(\mu, \gamma_{b,\sigma}\otimes \bar{\mu})\leq 2$ 
for any $\sigma \in \mathbb{S}^{n-1}$ and $b = \sigma \int x\cdot \sigma d\mu$. Hence, the $W_1$ estimate \eqref{W1boundLipschitz}   can not become active until   $ {\epsilon} \leq 4 \left( \int |\nabla u|^2 d\mu \right)  /C(\lambda)^2 \leq 4 \lambda^2 / C(\lambda)^2$.  By suitable modification of $C(\lambda)$, we may assume $C(\lambda)^2 \geq 16 \lambda^2  C_1(\lambda)$, so that the claim of the theorem is automatically satisfied whenever $\epsilon >  1/(4 C_1(\lambda))$.  This completes the proof. 
\end{proof}

\subsection{Proof of Theorem \ref{thm:ThmExpConcentration}}

\begin{proof}[Proof of Theorem \ref{thm:ThmExpConcentration}] By suitable modification, we can assume without loss of generality that $\int F d\mu =0$ and $C(L)\geq 2\sqrt{2}$, so that we may restrict attention to the case where   $\epsilon\leq 1/2$ (in the complementary case, the $W_1$ estimate will be automatically satisfied for similar reasons as argued in the final steps of the proof of Theorem \ref{thm:QstableLSI}). 
The Herbst argument establishes \eqref{LipschitzConcentrationIneq} by considering the function 
$$
H(\lambda) = \log\left( \int e^{\lambda F} d\mu \right) , 
$$
and using the LSI to establish  the differential inequality 
$$
\frac{d}{d\lambda}\left( \frac{H(\lambda)}{\lambda}\right) = \frac{H'(\lambda)}{\lambda} - \frac{H(\lambda)}{\lambda^2}\leq \frac{L^2}{2}.
$$
This is then integrated with respect to $\lambda$ on $(0,1)$ to establish the inequality \eqref{LipschitzConcentrationIneq}.  
So, by Markov's inequality, 
\begin{align*}
\left|\left\{s\in (0,1) : \frac{L^2}{2} - \left.  \frac{d}{d\lambda}\left(\frac{H(\lambda)}{\lambda} \right)\right|_{\lambda =s}  \geq \frac{\epsilon L^2}{2} \right\}\right| &\leq \frac{2}{\epsilon L^2} \left(  \frac{L^2}{2} - \log\left( \int e^{  F} d\mu \right)  \right) \leq  \frac{1}{2}, 
\end{align*}
where the last inequality follows by our hypothesis on $F$.  Therefore, there exists $\lambda_0 \in [1/2,1]$ for which 
$$
\frac{\int e^{\lambda_0 F}\log(e^{\lambda_0 F} ) d\mu }{\int e^{\lambda_0 F} d\mu   }  - \log\left( \int e^{\lambda_0 F} d\mu \right) 
=   \lambda_0 H'(\lambda_0)  -  H(\lambda_0) \geq (1-\epsilon) \lambda_0^2    \frac{L^2}{2}  .
$$
Multiplying through by $\int e^{\lambda_0 F} d\mu $, we have 
\begin{align}
\Ent_{\mu}(e^{\lambda_0 F} ) \geq (1-\epsilon) \lambda_0^2    \frac{L^2}{2}  \int e^{\lambda_0 F} d\mu  \geq 2(1-\epsilon)  \int \left| \nabla e^{\lambda_0 F/2}\right|^2 d\mu.\label{almostLSI}
\end{align}
Since $\lambda_0\leq 1$, we have that $\log e^{\lambda_0 F/2}$ is $L/2$-Lipschitz.  As a consequence,  Theorem \ref{thm:QstableLSI} applies to yield the estimate 
\begin{align}
W_1(\mu,  \gamma_{b,\sigma} \otimes \bar{\mu} )\leq C(L)    \left( \int \left|\nabla e^{\lambda_0 F/2}\right|^2 d\mu  \right)^{-1/2}\sqrt{\epsilon},\label{fromThmW1}
\end{align}
for probability measures $\gamma_{b,\sigma}, \bar{\mu}$ as defined in the statement of the theorem. 

By the LSI for $\mu$ together with \eqref{almostLSI}, we have 
$$
 \int \left| \nabla e^{\lambda_0 F/2}\right|^2 d\mu  \geq (1-\epsilon) \lambda_0^2    \frac{L^2}{4}  \int e^{\lambda_0 F} d\mu \geq (1-\epsilon)     \frac{L^2}{16}  \int e^{\lambda_0 F} d\mu .
$$
Using the fact that $\frac{d}{d\lambda}\left(\lambda \frac{L^2}{2} -  \frac{H(\lambda)}{\lambda}\right) \geq 0$, we have
$\frac{L^2}{2} - H(1) \geq \lambda_0 \frac{L^2}{2}  - \frac{H(\lambda_0)}{\lambda_0}$.
Rearranging yields, for $\epsilon < 1$, 
$$
 \int e^{\lambda_0 F} d\mu \geq e^{-\lambda_0(1-\lambda_0)L^2/2 }\left(  \int e^{ F}d\mu\right)^{\lambda_0} \geq 
 e^{-\lambda_0(1-\lambda_0)L^2/2 }\left(  e^{L^2/2(1-\epsilon/2)}  \right)^{\lambda_0}\geq e^{L^2/8}.
$$
Therefore, $\int \left| \nabla e^{\lambda_0 F/2}\right|^2 d\mu \geq    \frac{L^2}{32} e^{L^2/8}$, so that this term  can be absorbed into the constant $C(L)$ in \eqref{fromThmW1}, completing the proof. 
\end{proof}

We conclude with a stability estimate for another formulation of Gaussian concentration. The Markov inequality argument applied to \eqref{LipschitzConcentrationIneq} shows that any 1-Lipschitz $F$ satisfies the Gaussian concentration inequality 
$$
\mu\left(\left\{ F \geq t + \int F d\mu \right\} \right) \leq e^{-t^2/2}, ~~~~ t\geq 0.
$$
Unlike \eqref{LipschitzConcentrationIneq}, the inequality here is actually strict, and this form of concentration inequality is actually strictly weaker. A simple corollary of Theorem \ref{thm:ThmExpConcentration} is  the following stability version of this result. 
\begin{cor}
Let  $\mu$ and $C$ be as in Theorem \ref{thm:QstableLSI}, and consider 1-Lipschitz  $F$.  If 
$$
\mu\left(\left\{ F \geq t + \int F d\mu \right\} \right) \geq \exp\left( -(1+\epsilon/2)\frac{t^2}{2}\right)
$$
for some $t>0$ and $\epsilon \geq 0$, then $\mu$ satisfies \eqref{W1boundLipschitz} for $L=t$. 
\end{cor}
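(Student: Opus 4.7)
The plan is to reduce the tail-based hypothesis to the exponential-moment hypothesis of Theorem \ref{thm:ThmExpConcentration} by a standard Markov-inequality inversion, after rescaling the Lipschitz function. The point is that, in the Herbst argument, the optimal rescaling of a $1$-Lipschitz $F$ to match a deviation level $t$ is to consider $\lambda F$ with $\lambda = t$, so the natural candidate to feed into Theorem \ref{thm:ThmExpConcentration} is $G := t F$, which is $t$-Lipschitz.

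First, I would set $L = t$, $G := t F$, and observe $\int G\, d\mu = t \int F\, d\mu =: t\bar F$. Then, by Markov's inequality applied on the threshold $t + \bar F$,
\begin{align*}
\int e^{G}\, d\mu \;=\; \int e^{t F}\, d\mu \;\geq\; e^{t(t + \bar F)}\, \mu\!\left(\{ F \geq t + \bar F\}\right).
\end{align*}
Plugging the hypothesized lower bound on the tail probability gives
\begin{align*}
\int e^{G}\, d\mu \;\geq\; \exp\!\left( t \bar F + t^2 - (1 + \epsilon/2)\tfrac{t^2}{2} \right)
\;=\; \exp\!\left( \int G\, d\mu + \tfrac{L^2}{2}(1 - \epsilon/2) \right),
\end{align*}
since $t^2 - (1+\epsilon/2)t^2/2 = t^2(1-\epsilon/2)/2 = L^2(1-\epsilon/2)/2$.

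Thus $G$ is $L$-Lipschitz and satisfies the exponential-moment lower bound in the hypothesis of Theorem \ref{thm:ThmExpConcentration} with the same $\epsilon$. Applying that theorem directly yields a direction $\sigma \in \mathbb{S}^{n-1}$ and a barycenter $b$ for which $W_1(\mu, \gamma_{b,\sigma}\otimes \bar\mu) \leq C(L)\sqrt{\epsilon}$, which is exactly \eqref{W1boundLipschitz} with $L = t$. There is essentially no obstacle here: the rescaling $F \mapsto tF$ converts the tail-type bound into the moment-type bound with no loss in $\epsilon$, so the corollary follows immediately from Theorem \ref{thm:ThmExpConcentration}.
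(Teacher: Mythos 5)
Your proposal is correct and matches the paper's proof essentially verbatim: both apply Markov's inequality to convert the tail lower bound into the exponential-moment lower bound required by Theorem \ref{thm:ThmExpConcentration} for the $t$-Lipschitz function $tF$. The only cosmetic difference is that the paper first normalizes $\int F\,d\mu = 0$, whereas you carry the mean $\bar F$ through explicitly; the arithmetic $t^2 - (1+\epsilon/2)t^2/2 = (t^2/2)(1-\epsilon/2)$ is identical in both.
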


However, the classical concentration bound $\mu\left(\left\{ F \geq t + \int F d\mu \right\} \right) \leq e^{-t^2/2}$ can be sharpened into a bound of the form $\mu\left(\left\{ F \geq t + \int F d\mu \right\} \right) \leq Ce^{-t^2/2}/t$, using for example the Bakry-Ledoux isoperimetric inequality \cite{BL96} or the Caffarelli contraction theorem and refined concentration bounds for the Gaussian measure. Because the dependence of $C$ on $t$ is not explicit, it may be that the above Corollary is vacuous, in that taking $\epsilon$  small enough relative to $C(t)$ (to activate the $W_1$ estimate \eqref{W1boundLipschitz})  always makes the above lower bound greater than the improved upper bound.  As such, it is not clear if this statement is of any interest, but we include it because the question of stability for this way of encoding Gaussian concentration for uniformly log-concave measures seemed like a natural question the reader may wonder about after reading this work. 

\begin{proof}
We may assume that    $\int F d\mu = 0$.  By the hypothesis and the Markov inequality, we have 
$$
\exp\left( -(1+\epsilon/2)\frac{t^2}{2}\right)\leq \mu\left(\{ F \geq t \} \right) \leq e^{-t^2} \int e^{t F}d\mu .
$$
Multiplying through by $\exp(t^2)$ gives 
$$
\int e^{t F}d\mu \geq \exp\left( \frac{t^2}{2}(1-\epsilon/2)\right).
$$
Hence, Theorem \ref{thm:ThmExpConcentration} applies to the $t$-Lipschitz function $t F$.
\end{proof}

\vspace{5mm}

\textbf{\underline{Acknowledgments:}} This work benefited from support from the France-Berkeley fund, and ANR-11-LABX-0040-CIMI within the program ANR-11-IDEX-0002-02. M.F. was partly supported by Project EFI (ANR-17-CE40-0030) of the French National Research Agency (ANR) T.C. was partly supported by NSF grants CCF-1750430 and CCF-1704967. We thank Lorenzo Brasco and Michel Ledoux for their helpful remarks on a preliminary version of this work.

\end{document}